\documentclass[11pt]{amsart}
\usepackage{
amssymb
,amsthm
,amsmath
,mathtools
,mathdots
}
\usepackage[all]{xy}
\usepackage[top=35truemm,bottom=35truemm,left=30truemm,right=30truemm]{geometry}
\usepackage[usenames]{color}

\SelectTips{cm}{12}
\objectmargin={1pt}

\newcommand{\C}{\mathbb{C}}

\newcommand{\PP}{\mathcal{P}}
\newcommand{\Sc}{\mathcal{S}}
\newcommand{\NN}{\mathcal{N}}

\newcommand{\GL}{\mathrm{GL}}
\newcommand{\SL}{\mathrm{SL}}
\newcommand{\SO}{\mathrm{SO}}
\newcommand{\Sp}{\mathrm{Sp}}
\renewcommand{\O}{\mathrm{O}}

\newcommand{\oo}{\mathfrak{o}}
\newcommand{\g}{\mathfrak{g}}
\newcommand{\h}{\mathfrak{h}}
\renewcommand{\sl}{\mathfrak{sl}}

\newcommand{\Irr}{\mathrm{Irr}}
\newcommand{\temp}{\mathrm{temp}}
\newcommand{\tr}{\mathrm{tr}}
\newcommand{\st}{\mathrm{st}}
\newcommand{\Ad}{\mathrm{Ad}}
\newcommand{\orth}{\mathrm{orth}}
\newcommand{\symp}{\mathrm{symp}}

\newcommand{\iif}{&\quad&\text{if }}
\newcommand{\other}{&\quad&\text{otherwise}}
\newcommand{\resp}{resp.~}
\renewcommand{\1}{\mathbf{1}}

\newcommand{\tl}[1]{\widetilde{#1}}
\newcommand{\pair}[1]{\langle #1 \rangle}

\theoremstyle{plain}
\newtheorem{thm}{Theorem}[section]
\newtheorem{lem}[thm]{Lemma}
\newtheorem{prop}[thm]{Proposition}
\newtheorem{cor}[thm]{Corollary}
\newtheorem{conj}[thm]{Conjecture}
\newtheorem{hyp}[thm]{Hypothesis}

\theoremstyle{definition}
\newtheorem{rem}[thm]{Remark}

\title{Endoscopic transfer and the wavefront upper bound conjecture}
\author{Hiraku Atobe and Dan Ciubotaru}
\date{}
\subjclass[2010]{Primary 22E50; Secondary 11S37}
\keywords{wavefront set; endoscopic transfer; Arthur packets}
\address{
Department of Mathematics, Kyoto University, 
Kitashirakawa-Oiwake-cho, Sakyo-ku, Kyoto, 606-8502, Japan
}
\email{
atobe@math.kyoto-u.ac.jp
}
\address{
Mathematical Institute, University of Oxford, Oxford OX2 6GG, UK
}
\email{
dan.ciubotaru@maths.ox.ac.uk
}
\allowdisplaybreaks

\begin{document}
\begin{abstract}
We verify the local analogue of Jiang's conjecture for the upper bound of 
the geometric wavefront sets of 
Arthur type representations of split classical $p$-adic groups with $p\gg 0$, 
under a certain condition. 
As a consequence, 
we also obtain the upper bound conjecture of Kim and the second author, 
and Hazeltine--Liu--Lo--Shahidi, 
under the same assumptions.
The proof uses Waldspurger's work on the endoscopic transfer supplemented by results of Konno and Varma, 
as well as the wavefront set computations in the unipotent case by Mason-Brown--Okada and the second author.
\end{abstract}

\maketitle

\section{Introduction}
This work is motivated by the elusive relation 
between the irreducible admissible Harish-Chandra characters of a reductive $p$-adic group 
and the Langlands-Arthur parameters. 
Arguably the best-known instance of this relation is the Hiraga--Ichino--Ikeda formal degree formula 
for square-integrable characters in terms of adjoint $\gamma$-factors \cite{HII}. 
The formal degree concerns the coefficient of the distribution attached to the zero nilpotent orbit 
in the Harish-Chandra--Howe local character expansion. 
At the opposite end, 
the largest orbits that contribute to the character expansion determine the wavefront set of the character distribution. 
They are the focus of the present paper.
 Assuming $p$ large (see Theorem \ref{main} for the precise conditions on $p$), we prove that  
the orbits of maximal dimension in the union of the geometric wavefront sets of the representations 
in {\it any} $A$-packet for a split classical $p$-adic group 
{\it equal} the Spaltenstein dual of the nilpotent orbit given by the $A$-parameter. 
\par

Certain relations of this sort, Conjectures \ref{upper} and \ref{jiang} , 
were proposed in \cite{CK,HLLS} and \cite{LS}, respectively. 
The former was motivated by the results of \cite{CMBO1,CMBO2} for unipotent representations of split (and `inner-to-split') $p$-adic groups, whereas the latter is a local analogue of Jiang's conjecture \cite{J}. 
The fact that such a link should exist had already been apparent from 
the foundational work of Adams--Barbasch--Vogan on the definition and construction of microlocal Arthur packets \cite{ABV}. 
In Theorem \ref{main}, we also show that the full Conjectures \ref{upper} and \ref{jiang} 
hold by further assuming Hypothesis \ref{hypo}.
\vskip 10pt

We fix a non-archimedean local field $F$ of characteristic zero and of residue characteristic $p \gg 0$.
Let $\overline{F}$ be a fixed algebraic closure of $F$. 
In this section, we state our main theorem (Theorem \ref{main})
and explain our idea for the proof. 

\subsection{Wavefront set}
Let $H$ be a connected reductive group over $F$ with the Lie algebra $\h$.
We denote by $\NN(\h(F))$ (\resp $\NN(\h(\overline{F}))$) 
the set of nilpotent $\Ad(H(F))$-orbits (\resp $\Ad(H(\overline{F}))$-orbits) 
in $\h(F)$ (\resp $\h(\overline{F})$).
For an irreducible admissible representation $\pi$ of $H(F)$, 
we denote its distribution character by $\Theta_\pi$. 
The Harish-Chandra--Howe local character expansion (\cite[Theorem 16.2]{HC}) states that 
\[
\Theta_\pi(\exp(X)) = \sum_{\oo_H \in \NN(\h(F))} c_{\oo_H}(\pi) \hat\mu_{\oo_H}(\exp(X))
\]
for $X \in \h(F)$ sufficiently near to $0$, 
where $\hat\mu_{\oo_H}$ is the Fourier transform of the orbital integral, 
and $c_{\oo_H}(\pi)$ is a constant. 
Set $\NN(\pi) = \{ \oo_H \in \NN(\h(F)) \;|\; c_{\oo_H}(\pi)\not= 0\}$ 
and let $\bar\NN(\pi)$ be the set of $\oo_H^\st \in \NN(\h(\overline{F}))$
such that $\oo_H^\st$ contains a rational orbit $\oo_H$ belonging to $\NN(\pi)$.
We denote by $\bar\NN(\pi)^{\max}$ the subset of $\bar\NN(\pi)$ consisting of maximal orbits
with respect to the closure ordering
\[
\oo_H^\st \leq \oo_H'^\st \overset{\text{def}}{\iff} \oo_H^\st \subset \overline{\oo_H'^\st}.
\]
We call $\bar\NN(\pi)^{\max}$ the \emph{(geometric) wavefront set} of $\pi$. 

\subsection{$L$-parameters and $A$-parameters}
Denote by $H^\vee$ the complex Langlands dual group, 
and by $\h^\vee$ the Lie algebra of $H^\vee$. 
Let $W_F$ be the Weil group of $F$. 
An \emph{$A$-parameter} for $H(F)$ is a homomorphism 
\[
\psi \colon W_F \times \SL_2(\C) \times \SL_2(\C) \rightarrow H^\vee
\]
such that 
$\psi(W_F)$ consists of semisimple elements in a bounded subset of $H^\vee$, 
$\psi|_{W_F}$ is smooth and $\psi|_{\SL_2(\C) \times \SL_2(\C)}$ is algebraic. 
It is called \emph{tempered} if $\psi$ is trivial on the second $\SL_2(\C)$. 
Let $\Psi(H(F))$ be the set of $H^\vee$-conjugacy classes of $A$-parameters for $H(F)$, 
and let $\Psi_\temp(H(F))$ be its subset consisting of tempered $A$-parameters. 
\par

Similarly, an \emph{$L$-parameter} for $H(F)$ is a homomorphism 
\[
\phi \colon W_F \times \SL_2(\C) \rightarrow H^\vee
\]
such that $\phi(W_F)$ consists of semisimple elements, 
$\phi|_{W_F}$ is smooth and $\phi|_{\SL_2(\C)}$ is algebraic. 
It is called \emph{tempered} if $\phi(W_F)$ is bounded. 
Under identifying $W_F \times \SL_2(\C)$ with $W_F \times \SL_2(\C) \times \{\1\}$, 
the tempered $L$-parameters are the same as the tempered $A$-parameters. 
Let $\Phi(H(F))$ be the set of $H^\vee$-conjugacy classes of $L$-parameters for $H(F)$, 
and let $\Phi_\temp(H(F))$ be its subset consisting of tempered $L$-parameters. 
Hence $\Psi_\temp(H(F)) = \Phi_\temp(H(F))$. 
\par

For $\psi \in \Psi(H(F))$, by the derivative of $\psi|_{\SL_2(\C) \times \SL_2(\C)}$, 
we have a linear map
\[
d\psi|_{\SL_2(\C) \times \SL_2(\C)} \colon \sl_2(\C) \times \sl_2(\C) \rightarrow \h^\vee.
\]
We set 
\[
N_{\psi} = 
d\psi\left(\1, \begin{pmatrix}
0 & 1 \\ 0 & 0
\end{pmatrix}\right) 
\in \h^\vee. 
\]
The $\Ad(H^\vee)$-orbit of $N_\psi$ is denoted by $\Ad(H^\vee) N_\psi$.
\par 

Let $\NN(\h^\vee)$ be the set of nilpotent $\Ad(H^\vee)$-orbits in $\h^\vee$.
We denote by 
\[
d_{H} \colon \NN(\h(\overline{F})) \rightarrow \NN(\h^\vee), \quad
d_{H^\vee} \colon \NN(\h^\vee) \rightarrow \NN(\h(\overline{F}))
\]
the Spaltenstein duality maps (\cite[Section 10]{Sp}). 
The images of these maps are the sets of \emph{special nilpotent conjugacy classes}.
Moreover, $\oo_H^\st \leq d_{H^\vee}(d_H(\oo_H^\st))$, 
and the equality holds if and only if $\oo_H^\st$ is special. 
See \cite[Appendix A]{BV} for more details.
An $A$-parameter $\psi \in \Psi(H(F))$ gives 
a nilpotent orbit 
\[
d_{H^\vee}(\Ad(H^\vee) N_\psi) \in \NN(\h(\overline{F})). 
\]

\subsection{$L$-packets and $A$-packets}
Now suppose that $H$ is a split classical group over $F$. 
Namely, $H$ is a symplectic group or a special orthogonal group.
Let $\Irr(H(F))$ be the set of equivalence classes of irreducible representations of $H(F)$, 
and let $\Irr_\temp(H(F))$ be its subset consisting of tempered representations. 
\par

Arthur \cite[Theorem 2.2.1]{Ar} defines 
the notion of the \emph{$A$-packet} $\Pi_\psi$ associated to $\psi \in \Psi(H(F))$.
This is a finite (multi-)set over $\Irr(H(F))$. 
Moreover, if $\phi \in \Psi_\temp(H(F)) = \Phi_\temp(H(F))$, 
the $A$-packet $\Pi_\phi$ is a subset of $\Irr_\temp(H(F))$, 
and 
\[
\Irr_\temp(H(F)) = \bigsqcup_{\phi \in \Phi_\temp(H(F))} \Pi_\phi.
\]
Combining the Langlands classification, 
one can define the \emph{$L$-packet} associated to $\phi \in \Phi(H(F))$, such that 
\[
\Irr(H(F)) = \bigsqcup_{\phi \in \Phi(H(F))} \Pi_\phi.
\]
We refer to this decomposition as the \emph{local Langlands correspondence} for $H(F)$.
\par

For $\psi \in \Psi(H(F))$, 
its \emph{dual $A$-parameter} $\widehat\psi$
is defined by 
\[
\widehat\psi(w,g_1,g_2) = \psi(w,g_2,g_1).
\]
We denote the Zelevinsky--Aubert dual of $\pi \in \Irr(H(F))$ by $\hat\pi$. 
As explained in \cite[Lemma 4.4.4 (1)]{AGIKMS}, we have
\[
\Pi_{\widehat\psi} = \{\hat\pi \;|\; \pi \in \Pi_\psi\}.
\]
\par

For $\phi \in \Phi(H(F))$, 
one can also define $\widehat\phi \colon W_F \times \SL_2(\C) \times \SL_2(\C) \rightarrow H^\vee$ by $\widehat\phi(w,g_1,g_2) = \phi(w,g_2)$ 
although $\widehat\phi$ might not be an $A$-parameter. 
We define $N_{\widehat\phi} \in \h^\vee$ by the same formula in the previous subsection. 
Namely, it is the image of 
$\begin{pmatrix}
0 & 1 \\ 0 & 0
\end{pmatrix}$
under the derivative $d\phi|_{\SL_2(\C)} \colon \sl_2(\C) \rightarrow \h^\vee$.

\subsection{Conjectures and the main result}
The following conjecture was independently formulated by Kim and the second author \cite[Conjectures 1.1, 1.9]{CK}
and Hazeltine--Liu--Lo--Shahidi \cite[Conjecture 1.1]{HLLS}. 

\begin{conj}[The upper bound conjecture]\label{upper}
Let $\phi \in \Phi(H(F))$. 
For any $\pi \in \Pi_\phi$, 
if $\oo_H^\st \in \bar\NN(\hat\pi)$, then 
we have
\[
\oo_H^\st \leq d_{H^\vee}(\Ad(H^\vee)N_{\widehat\phi}).
\]
Moreover, if $\phi$ is tempered, 
then $\bar\NN(\hat\pi)^{\max} = \{d_{H^\vee}(\Ad(H^\vee)N_{\widehat\phi})\}$ for some $\pi \in \Pi_\phi$.
\end{conj}

This is relevant for any connected reductive group $H$ 
provided the local Langlands correspondence for $H(F)$ is assumed. 
This conjecture is known in the following cases. 

\begin{itemize}
\item
$H=\GL_m$, \cite{MW}, see \cite[Theorem 1.4]{CK}. 

\item
$\pi$ is any depth-zero simple supercuspidal representation of a classical group (\cite[Theorem 1.8]{CK}). 

\item
{$H$ is an inner form of a split group and} 
$\pi$ is any unipotent representation of $H(F)$ with real infinitesimal character
(\cite[Theorem 1.4.1]{CMBO2}).

\item
$H$ is an exceptional group $G_2$ (\cite[Theorem 1.10]{CK}).
\end{itemize}

\begin{rem}
An analogue of Conjecture \ref{upper} for covering groups 
has been formulated by Gao--Liu--Lo--Shahidi \cite[Conjecture 1.2]{GLLS}, and proved for the Kazhdan--Patterson cover of $\GL_m$ in \cite[Theorem 1.3]{GLLS}.
\end{rem}

Conjecture \ref{upper} is reduced to the case where $\phi$ is tempered 
by \cite[Proposition 2.2]{CK} or \cite[Theorem 1.2]{HLLS}. 
This case is a special case where $\psi = \widehat\phi$ of 
the local analogue of Jiang's conjecture \cite[Conjecture 4.2]{J} stated as follows 
(see \cite[Conjecture 1.7]{LS}).

\begin{conj}[Jiang's conjecture]\label{jiang}
Let $\psi \in \Psi(H(F))$. 
For any $\pi \in \Pi_\psi$, 
if $\oo_H^\st \in \bar\NN(\pi)$, then 
we have
\[
\oo_H^\st \leq d_{H^\vee}(\Ad(H^\vee)N_{\psi}).
\]
Moreover, there exists $\pi \in \Pi_\psi$ such that 
$\bar\NN(\pi)^{\max} = \{d_{H^\vee}(\Ad(H^\vee)N_{\psi})\}$.
\end{conj}

In this paper, we give a result toward Conjecture \ref{jiang}
and hence Conjecture \ref{upper} for split classical groups under some conditions. 
Our main result is stated as follows. 

\begin{thm}\label{main}
Let $H$ be a split classical group over $F$.
Suppose that one of the following holds: 
\begin{description}
\item[(B)]
$H = \SO_{2n+1}$ and $p > 6n+3$; 
\item[(C)]
$H = \Sp_{2n}$ and $p > 6n$; 
\item[(D)]
$H = \SO_{2n}$ and $p > 6n$. 
\end{description}
Let $\psi \in \Psi(H(F))$. 
Then we have:
\begin{enumerate}
\item
There exists $\pi \in \Pi_\psi$ such that 
\[
d_{H^\vee}(\Ad(H^\vee)N_\psi) \in \bar\NN(\pi).
\]
\item
For $\pi \in \Pi_\psi$ and for $\oo_H^\st \in \bar\NN(\pi)$, 
if $\oo_H^\st \not= d_{H^\vee}(\Ad(H^\vee)N_\psi)$, then 
\[
\dim(\oo_H^\st) < \dim(d_{H^\vee}(\Ad(H^\vee)N_\psi)).
\]
Here, $\dim(\oo_H^\st)$ is the Zariski dimension of the orbit as an algebraic variety.
\item
If we further assume Hypothesis \ref{hypo} below, 
then Conjecture \ref{jiang} is true.
In particular, Conjecture \ref{upper} also holds.
\end{enumerate}
\end{thm}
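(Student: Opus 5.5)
Our plan is to reduce to the unipotent case via endoscopic transfer. The abstract points to Waldspurger's endoscopic transfer of local character expansions, complemented by Konno and Varma (which is where the bounds on $p$ come from), and to the unipotent wavefront computations of Mason-Brown--Okada and the second author.

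The main input is Arthur's endoscopic character identity. For $\psi \in \Psi(H(F))$ and $s$ in the centralizer of $\psi$, the twisted sum $\sum_{\pi\in\Pi_\psi}\pair{s,\pi}\Theta_\pi$ is the transfer of the stable distribution $\Theta_{\psi'}$ on an endoscopic group $H'$. Here $\psi$ factors through $\psi'$ of $H'$. Taking $s=1$ gives the stable sum; for $\GL$-twisted endoscopy, the twisted identity expresses everything through $\GL_N$.

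\textbf{Step 1: transfer of germs.} Near the identity, the local character expansion of a stable distribution on $H'$ transfers to $H$ via the transfer of Fourier transforms of nilpotent orbital integrals. Waldspurger computes this transfer: the geometric orbits that appear are bounded by an induction map from $\NN(\h'(\overline F))$ to $\NN(\h(\overline F))$, and the top term is nonzero. Konno and Varma supply the needed $p$-adic input, namely the exponential map, homogeneity of germs, and the range of validity, under the stated bounds on $p$.

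\textbf{Step 2: stable parts.} Choose the elliptic endoscopic datum so that $\psi'$ decomposes as a product over factors of type $\GL$ and classical groups whose restriction to $W_F$ is as simple as possible. For the stable distribution attached to $\psi'$ one then uses:
\begin{itemize}
\item for the $\GL$ factors, Mœglin--Waldspurger;
\item for factors that become unipotent parameters, the computation of Ciubotaru--Mason-Brown--Okada, which gives the wavefront as $d(\Ad N)$ with nonzero stable leading coefficient.
\end{itemize}
The key combinatorial identity is compatibility of Spaltenstein duality with endoscopic induction: the induced orbit from $H'$ of $d_{H'^\vee}(\Ad N_{\psi'})$ equals $d_{H^\vee}(\Ad N_\psi)$. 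This should follow from the Barbasch--Vogan/Lusztig--Spaltenstein identity $d\circ \mathrm{sat}=\mathrm{ind}\circ d$, with care for orthogonal/symplectic partitions.

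\textbf{Step 3: deduce (1)–(3).} Write each $\Theta_\pi$ as a combination over $s$ of the endoscopic transfers by Fourier inversion on the finite group $\Sc_\psi$. Dimension bounds from Step 1 then give (2), since every contributing orbit has dimension at most that of the dual orbit, with equality only for that orbit. For (1), the coefficient of $d_{H^\vee}(\Ad N_\psi)$ in the stable sum ($s=1$) is nonzero. Because coefficients of rational orbits inside the geometric orbit could cancel, we argue with the stable (geometric) leading coefficient, which is a positive multiple of the stable one on $H'$. Hence some $\pi$ has this orbit in $\bar\NN(\pi)$. For (3), Hypothesis \ref{hypo} presumably upgrades the dimension bound to the closure-order bound, for instance via transfer-compatible closure relations or non-maximal orbits of equal dimension. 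Conjecture \ref{upper} then follows by the reduction of \cite[Proposition 2.2]{CK} applied to $\psi=\widehat\phi$.

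The main obstacle is Step 1–2: controlling Waldspurger's transfer of the non-leading nilpotent germs. Only dimensions, not closure order, are preserved by induction through endoscopy, which is exactly why (2) is stated with dimensions. The hardest part is also handling non-unipotent $\psi|_{W_F}$. Here we would try a reduction to depth zero via Moy–Prasad types, or via the semisimple descent of Konno.
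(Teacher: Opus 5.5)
There is a genuine gap, and it sits at the hard case of non-unipotent $\psi$. Your Step 2 cannot be carried out. The elliptic endoscopic groups of a split classical $H$ are products $H_1\times H_2$ of classical groups; there are no $\GL$ factors, which occur only in Levi subgroups. They are indexed by $s\in\Sc_\psi$, and $\psi_i$ is just the $(\pm1)$-eigenspace summand of $\psi$. So $\psi_i|_{W_F}$ is no simpler than $\psi|_{W_F}$, and nothing ``becomes unipotent''. More fundamentally, standard endoscopy says nothing about the stable distribution $S\Theta_\psi$ on $H$ itself, which is the base case of any such induction.

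You mention twisted endoscopy to $\GL_m$ but never use it, and it is the missing comparison device. By M{\oe}glin--Waldspurger, Varma and Konno (Theorem \ref{twist}), the twisted wavefront set of $\tl\pi_\psi$ is $d_{G^\vee}(\Ad(G^\vee)N_\psi)$. This depends only on $N_\psi$, and the leading coefficients are constant on its rational orbits. Let $\psi_0$ be the anti-tempered unipotent parameter with $\Ad(H^\vee)N_{\psi_0}=\Ad(H^\vee)N_\psi$. Then there is $e\neq0$ such that $\Theta_{\tl\pi_\psi}-e\,\Theta_{\tl\pi_{\psi_0}}$ involves only smaller orbits. Theorem \ref{transfer}, which is itself extracted from the $\psi_0$ case via \cite{CMBO2} and homogeneity, together with injectivity of stable transfer, then shows the following: the terms of $S\Theta_\psi$ on orbits of dimension $\geq\dim d_{H^\vee}(\Ad(H^\vee)N_\psi)$ are $e$ times those of $S\Theta_{\psi_0}$, which \cite{CMBO2} controls. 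This is Corollary \ref{cor1}. It gives (1) and the case $s_\psi=1$ of (2), without any depth-zero reduction or semisimple descent.

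In the standard-endoscopy step, your ``key combinatorial identity'' is false. The orbit carrying Waldspurger's transfer from $(\oo_{H_1}^\st,\oo_{H_2}^\st)$ is $W(\oo_{H_1}^\st,\oo_{H_2}^\st)$. This is not Lusztig--Spaltenstein induction, since endoscopic groups are not Levi subgroups, and it need not be special. For example, take $H=\SO_5$, $H_1\times H_2=\SO_3\times\SO_3$ and $N_{\psi_i}$ regular in $\Sp_2(\C)$. Then $W((1,1,1),(1,1,1))=(2,2,1)$ has dimension $4$, while $d_{H^\vee}(\Ad(H^\vee)N_\psi)=(3,1,1)$ has dimension $6$. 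What holds, and suffices, is the inequality of Proposition \ref{WS} together with Waldspurger's dimension formula (Lemma \ref{dimW}), run as an induction on $m$.

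Your Fourier inversion over all of $\Sc_\psi$ has two further problems:
\begin{itemize}
\item It would require nilpotent transfer for every $s$, including endoscopic groups with non-split, possibly ramified, orthogonal factors. This lies outside Waldspurger's unramified setting.
\item The stable combination is $\sum_\pi\pair{s_\psi,\pi}_\psi\Theta_\pi$, not the $s=1$ sum. The plain sum $\sum_\pi\Theta_\pi$ is the endoscopic combination attached to $s_\psi$.
\end{itemize}
The paper uses only $s_\psi$, for which $\det\psi_i$ is trivial and so $H_1\times H_2$ is split. It applies this to $\Theta_{\Pi_\psi}=\sum_\pi\Theta_\pi$, and avoids cancellation by choosing $\oo_H^\st$ maximal in $\bigcup_\pi\bar\NN(\pi)$ and using M{\oe}glin--Waldspurger positivity of leading coefficients.

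Finally, closure order is not what is lost through standard endoscopy: $W$ is monotone (Lemma \ref{Worder}). The obstruction to (3) is on the twisted $\GL_m$ side, for non-leading special orbits. That is exactly what Hypothesis \ref{hypo} supplies, via Proposition \ref{refine}, and the existence part of Conjecture \ref{jiang} then follows from (1).
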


\subsection{Outline of the proof}\label{sec.outline}
Throughout the paper, we assume that $p \gg 0$ as in Theorem \ref{main}.
The proof of Theorem \ref{main} is similar to the proofs of the Generic Packet Conjecture 
by Konno \cite{K} and Varma \cite{V}. 
The main idea is to compare local character expansions via 
\emph{endoscopic character identities}.
\par

Fix $\psi \in \Psi(H(F))$.
We identify the $A$-packet $\Pi_\psi$ 
with the semisimple representation 
\[
\bigoplus_{\pi \in \Pi_\psi} \pi. 
\]
In particular, we consider its local character expansion 
\[
\Theta_{\Pi_\psi} = \sum_{\pi \in \Pi_\psi} \Theta_\pi
=  \sum_{\oo_H \in \NN(\h(F))} c_{\oo_H}(\Pi_\psi) \hat\mu_{\oo_H}
\]
with 
\[
c_{\oo_H}(\Pi_\psi) = \sum_{\pi \in \Pi_\psi}c_{\oo_H}(\pi).
\]
Define $\NN(\Pi_\psi) \subset \NN(\h(F))$ and $\bar\NN(\Pi_\psi) \subset \NN(\h(\overline{F}))$
similar to $\NN(\pi)$ and $\bar\NN(\pi)$, respectively. 
\par

Set $G = \GL_m$, where $m$ is the dimension of the standard representation $H^\vee \rightarrow G^\vee = \GL_m(\C)$.
By composing with this map, we can regard $\psi$ as an $A$-parameter for $G(F)$, 
and obtain an irreducible self-dual representation $\pi_\psi$ of $G(F)$. 
Define an involution $\theta$ on $G(F)$ by 
\[
\theta(g) = J{}^tg^{-1}J^{-1}, 
\quad
J = \begin{pmatrix}
&&&1 \\
&&-1&\\
&\iddots&& \\
(-1)^{m-1}&&
\end{pmatrix} \in G(F).
\]
We can extend $\pi_\psi$ to an irreducible representation $\tl\pi_\psi$ of $G \rtimes \pair{\theta}$.
The twisted character of $\tl\pi_\psi$ is denoted by $\Theta_{\tl\pi_\psi}$. 
\par

Let $\Sc_\psi$ be the component group for $\psi$, 
and let $\widehat\Sc_\psi$ be its Pontryagin dual. 
Recall that Arthur defined a map
\[
\Pi_\psi \rightarrow \widehat\Sc_\psi, \;
\pi \mapsto \pair{\cdot, \pi}_\psi. 
\]
Set $s_\psi = \psi(\1_{W_F}, \1, -\1)$ which is regarded as an element of $\Sc_\psi$.
In this setting, \cite[Theorem 2.2.1]{Ar} asserts that 
\begin{description}
\item[(T1)]
the distribution
\[
S\Theta_{\psi} = \sum_{\pi \in \Pi_\psi} \pair{s_\psi, \pi}_\psi \Theta_\pi
\]
is stable, and $\Theta_{\tl\pi_\psi}$ is the transfer of $S\Theta_\psi$; 
\item[(T2)]
Each $s \in \Sc_\psi$ with $s \not= 1$ gives an elliptic endoscopic group $H_1 \times H_2$ of $H$
and an $A$-parameter $\psi_i$ for $H_i$ 
(of the form $\psi_i = \widehat\phi_i$ for some $\phi_i \in \Phi_\temp(H_i)$) such that 
\[
S\Theta_{\psi, s} = \sum_{\pi \in \Pi_\psi} \pair{s \cdot s_\psi, \pi}_\psi \Theta_\pi
\]
is the transfer of $S\Theta_{\psi_1} \otimes S\Theta_{\psi_2}$. 
\end{description}
\par

The first step of the proof of Theorem \ref{main} is to establish an analogue for the twisted $\GL_m$ case.
By the twisted local character expansion established by Clozel \cite[Theorem 3]{C}, 
we can expand
\[
\Theta_{\tl\pi_\psi} 
= \sum_{\oo_G \in \NN(\g^\theta(F))} c_{\oo_G}(\tl\pi_\psi) \hat\mu_{\oo_G}. 
\]
See Section \ref{sec.TLCE} below for the notations. 
We define $\NN_\theta(\tl\pi_\psi) \subset \NN(\g^\theta(F))$ 
and $\bar\NN_\theta(\tl\pi_\psi) \subset \NN(\g^\theta(\overline{F}))$
similar to $\NN(\pi)$ and $\bar\NN(\pi)$, respectively. 
In Theorem \ref{twist} below, we will show that 
\[
\bar\NN_\theta(\tl\pi_\psi)^{\max} = \{d_{G^\vee}(\Ad(G^\vee) N_\psi)\}.
\]
This is an immediate consequence of 
results of Konno \cite{K}, Varma \cite{V}, and M{\oe}glin--Waldspurger \cite{MW}.
Moreover, Konno's result \cite[Theorem 4.1(2)]{K} says that  
the constant $c_{\oo_G}(\tl\pi_\psi)$ for $\oo_G \subset d_{G^\vee}(\Ad(G^\vee) N_\psi)$ 
does not depend on the choice of the rational orbit.
\par

Set $\oo_H^\st = d_{H^\vee}(\Ad(H^\vee) N_\psi)$ and $\oo_G^\st = d_{G^\vee}(\Ad(G^\vee) N_\psi)$.
The second step of the proof of Theorem \ref{main} is to show that 
there exists a constant $\gamma_{\oo_H}$ for each rational orbit $\oo_H \subset \oo_H^\st$ such that 
\[
\sum_{\oo_G \subset \oo_G^\st} \hat\mu_{\oo_G}
\quad
\text{is equal to the transfer of} 
\quad
\sum_{\oo_H \subset \oo_H^\st} \gamma_{\oo_H}\hat\mu_{\oo_H}
\]
on a small enough neighborhood of $\theta \in G(F) \rtimes \theta$.
See Theorem \ref{transfer} below. 
Hypothesis \ref{hypo} is an analogue of this result. 
These claims could be proven by similar arguments to the standard case explained in the next paragraph, 
but there is no explicit reference in the literature.
Thus we will prove Theorem \ref{transfer} in this paper. 
The proof uses Conjecture \ref{upper} in the case established in \cite{CMBO2},
which requires that $H$ is split over $F$.
\par

An analogous result for the standard endoscopy with unramified endoscopic groups
is known by Waldspurger \cite{W_pave}. 
Let $H_1 \times H_2$ be an unramified endoscopic group of $H$. 
For a special nilpotent orbit $\oo_{H_i}^\st \in \NN(\h_i(\overline{F}))$, 
Waldspurger explicitly defined a nilpotent orbit
\[
\oo_H^\st = W(\oo_{H_1}^\st, \oo_{H_2}^\st) \in \NN(\h(\overline{F})),
\]
and proved a certain transfer result in \cite[XII.9 Th\'eor\`eme]{W_pave}.
The third step of the proof of Theorem \ref{main} is to relate the Waldspurger map $W(\oo_{H_1}^\st, \oo_{H_2}^\st)$
and the Spaltenstein duality (Proposition \ref{WS}). 
\par

Using the two transfer results, we can relate the local character expansion of 
$\Theta_{\Pi_\psi} = S\Theta_{\psi,s_\psi}$
with the one of $\Theta_{\tl\pi_\psi}$ (\resp $S\Theta_{\psi_1} \otimes S\Theta_{\psi_2}$) 
if $s_\psi = 1$ (\resp $s_\psi \not= 1$).
Then by induction on $m$, 
we can deduce Theorem \ref{main} in Section \ref{sec.proof}. 
Notice that if $s = s_\psi \not= 1$, 
then the corresponding endoscopic group $H_1 \times H_2$ is split over $F$
so that we can apply \cite[XII.9 Th\'eor\`eme]{W_pave}. 

\begin{rem}\label{OvsSO}
The local Langlands correspondence for $H = \SO_{2n}$ as proven by Arthur, 
is given up to $\O_{2n}(F)$-conjugation. 
Thus, in turn, we should replace $\Ad(H^\vee)N_\psi$ by $\Ad(\O_{2n}(\C))N_\psi$ in our results. 
We will ignore this discrepancy since it is not an important difference for the geometric wavefront set results.
By replacing $\SO_{2n}$ with $\O_{2n}$, we can use the following properties:
\begin{enumerate}
\item
the distributions on $\SO_{2n}(F)$ considered in the paper are all $\O_{2n}(F)$-invariant;
\item
the transfer of stable distributions on $H(F)$ to $G(F) \rtimes \theta$ is injective;
\item
$\Ad(G^\vee)\oo_{H^\vee} \cap H^\vee = \oo_{H^\vee}$ for $\oo_{H^\vee} \in \NN(\h^\vee)$.
\end{enumerate}
Note that (2) and (3) hold when $H = \SO_{2n+1}$ or $H = \Sp_{2n}$. 
\end{rem}

\section{Twisted endoscopy}
Let $H$ be a split classical group over $F$, and $G = \GL_m$, 
where $m$ is the dimension of the standard representation of $H^\vee$. 
In this section, we compare the local character expansions of $S\Theta_\psi$ and $\Theta_{\tl\pi_\psi}$.

\subsection{Twisted local character expansion}\label{sec.TLCE}
Let $\theta$ be the involution on $G$ defined in Section \ref{sec.outline}. 
In this subsection, we will establish the twisted analogue of \cite[Theorem 1.4]{CK} 
for irreducible self-dual representations of $G(F)$ which are co-tempered. 
\par

Let $(\pi, V)$ be an irreducible self-dual representation of $G(F) = \GL_m(F)$. 
Then we can choose an operator $\tl\pi(\theta)$ on $V$ with $\tl\pi(\theta)^2 = \1$ 
such that 
\[
\tl\pi(\theta) \circ \pi(g) = \pi(\theta(g)) \circ \tl\pi(\theta), \quad \forall g \in G(F).
\]
The \emph{twisted distribution character} $\Theta_{\tl\pi}$ of $\pi$ is 
a distribution on $C_c^\infty(G(F) \rtimes \theta)$ given by 
\[
\Theta_{\tl\pi}(f) = \tr\left(
\int_{G(F)} f(g \theta) \pi(g) \circ \tl\pi(\theta) dg
\right).
\]
\par

Let $\g$ be the Lie algebra of $G$.
The group (\resp Lie algebra) of $\theta$-fixed points of $G$ (\resp $\g$) is denoted by $G^\theta$ (\resp $\g^\theta$).
We denote by $\NN(\g^\theta(F))$ 
the set of nilpotent $\Ad(G^\theta(F))$-orbits in $\g^\theta(F)$.
Clozel showed that $\Theta_{\tl\pi}$ has a twisted local character expansion 
\[
\Theta_{\tl\pi} = \sum_{\oo_G \in \NN(\g^\theta(F))} c_{\oo_G}(\tl\pi) \hat\mu_{\oo_G}
\]
on a small enough neighborhood of $\theta \in G(F) \rtimes \theta$
for some unique complex numbers $c_{\oo_G}(\tl\pi) \in \C$, 
where $\hat\mu_{\oo_G}$ is the Fourier transform of the twisted orbital integral associated to $\oo_G$. 
See \cite[Theorem 3]{C} for more details. 
Set $\NN_\theta(\tl\pi) = \{ \oo_G \in \NN(\g^\theta(F)) \;|\; c_{\oo_G}(\tl\pi) \not= 0\}$. 
Note that this set does not depend on the choice of $\tl\pi(\theta)$.
\par

Let $\NN(\g^\theta(\overline{F}))$ be 
the set of nilpotent $\Ad(G^\theta(\overline{F}))$-orbits in $\g^\theta(\overline{F})$. 
Note that there is a canonical injection 
\[
\NN(\g^\theta(\overline{F})) \hookrightarrow \NN(\g(\overline{F})) \cong \NN(\g(F)). 
\]
We regard $\NN(\g^\theta(\overline{F}))$ as a subset of $\NN(\g(\overline{F}))$ via this inclusion. 
Let $\bar\NN_\theta(\tl\pi)$ be the set of $\oo_G^\st \in \NN(\g^\theta(\overline{F}))$ 
such that $\oo_G^\st$ contains a rational orbit $\oo_G$ belonging to $\NN_\theta(\tl\pi)$. 
We denote by $\bar\NN_\theta(\tl\pi)^{\max}$ the subset of $\bar\NN_\theta(\tl\pi)$ 
consisting of maximal orbits with respect to the closure ordering in $\NN(\g(\overline{F}))$. 
We call $\bar\NN_\theta(\tl\pi)^{\max}$ the \emph{(geometric) wavefront set} of $\tl\pi$.
\par

Let $\psi \colon W_F \times \SL_2(\C) \times \SL_2(\C) \rightarrow G^\vee$ 
be an $A$-parameter for $G(F) = \GL_m(F)$.
It gives an irreducible representation $\pi_\psi$ of $G(F)$, 
which is a product of unitary Speh representations.

\begin{thm}\label{twist}
Let $\psi \colon W_F \times \SL_2(\C) \times \SL_2(\C) \rightarrow G^\vee$ 
be a self-dual tempered $A$-parameter for $G(F) = \GL_m(F)$, 
and let $\pi_\psi \in \Irr(G(F))$ be the corresponding representation. 
Then 
\[
\bar\NN_\theta(\tl\pi_\psi)^{\max} = \{d_{G^\vee}(\Ad(G^\vee) N_\psi)\}.
\]
Moreover, $c_{\oo_G}(\tl\pi_\psi) = c_{\oo_G'}(\tl\pi_\psi)$ 
for any rational orbits $\oo_G$ and $\oo_G'$ contained in $d_{G^\vee}(\Ad(G^\vee) N_\psi)$.
\end{thm}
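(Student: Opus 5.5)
We plan to deduce the statement from three inputs already mentioned in the outline: Mœglin--Waldspurger's computation of the (untwisted) wavefront set of $\pi_\psi$ for $\GL_m$, and Konno's and Varma's results relating the twisted local character expansion of $\tl\pi$ to the untwisted one of $\pi$.

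\emph{Step 1: the untwisted expansion.} Since $\psi$ is tempered, $\pi_\psi$ is the Aubert dual of a generic (indeed tempered) representation. By \cite{MW}, the untwisted expansion of $\Theta_{\pi_\psi}$ has a unique maximal geometric orbit. In terms of partitions, the Jordan type of $N_\psi$ on the standard representation is a partition $\lambda$. The maximal orbit has partition ${}^t\lambda$, which is $d_{G^\vee}(\Ad(G^\vee)N_\psi)$, and its coefficient is $1$. For $\GL_m$ every rational orbit in $\NN(\g(F))$ equals its geometric orbit, so this orbit is the only one occurring.

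\emph{Step 2: passage to the twisted expansion.} Here we use Konno \cite[Theorem 4.1]{K}, with its refinement in Varma \cite{V}. The statement to extract is that for an irreducible $\theta$-stable $\pi$, the maximal geometric orbits in $\bar\NN_\theta(\tl\pi)$ coincide with those of $\bar\NN(\pi)$ intersected with $\NN(\g^\theta(\overline{F}))$; part (2) of that theorem asserts this. Moreover, for the leading stable orbit $\oo_G^\st$, the twisted coefficients $c_{\oo_G}(\tl\pi)$ on $\theta$-fixed rational orbits $\oo_G\subset\oo_G^\st$ are all equal, up to a nonzero constant, to $\pm c_{\oo_G^\st}(\pi)$. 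The mechanism is that $\theta$-twisted Shalika germs / twisted orbital integrals on $\g^\theta$ near $\theta$ are related to ordinary ones on $\g$ by the norm map. Concretely, Konno proves that $\hat\mu_{\oo_G^\st}$ restricted to the twisted neighbourhood is a fixed multiple of $\sum_{\oo_G\subset\oo_G^\st}\hat\mu_{\oo_G}$. The sign comes from the normalization of $\tl\pi(\theta)$, which only changes coefficients by a common sign. Combining this with Step 1 gives nonvanishing of $c_{\oo_G}(\tl\pi_\psi)$ for every rational $\oo_G\subset d_{G^\vee}(\Ad(G^\vee)N_\psi)$, their mutual equality, and the vanishing of all non-smaller orbits.

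\emph{Step 3: the orbit lies in $\NN(\g^\theta(\overline{F}))$.} We must check that the orbit with partition ${}^t\lambda$ meets $\g^\theta$, which is a symplectic or orthogonal Lie algebra. Since $\psi$ is self-dual of orthogonal or symplectic type, $\lambda$ is a partition of that type. Its transpose lies in $\g^\theta(\overline{F})$ because every partition of $m$ is realized by a nilpotent in $\mathfrak{so}_m$ or $\mathfrak{sp}_m$, up to the parity constraints. This is the point to verify carefully; if needed, we would use that Konno's statement already produces a nonzero leading twisted coefficient, so the orbit is automatically $\theta$-stable.

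The main obstacle is Step 2. We need a precise transcription of Konno's twisted germ comparison that holds under our bound on $p$, valid for non-generic, Speh-type $\pi_\psi$, and not just generic ones as in \cite{K,V}. We would first check that Konno's Theorem 4.1 is stated for arbitrary irreducible $\theta$-stable $\pi$ in terms of the untwisted leading term, so that Step 1 can be fed in directly.
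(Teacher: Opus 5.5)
\textbf{There is a genuine gap in Step 2.} Your Step 1 is the paper's starting point. Step 2, however, carries the whole argument, and it rests on a mechanism that is not available.

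The untwisted $\hat\mu_{\oo_G^\st}$ is a distribution near $1\in G(F)$, whereas the twisted $\hat\mu_{\oo_G}$ live near $\theta$ on the other component $G(F)\rtimes\theta$. There is no ``restriction'' of one to the other. The germ of $\Theta_{\tl\pi}$ at $\theta$ is not obtained from the germ of $\Theta_\pi$ at $1$ by any norm-map identity between orbital integrals, so no comparison of this kind can transport the information of Step 1.

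The missing idea is that both expansions are read off from the same representation-theoretic object, the degenerate Whittaker models of $\pi_\psi$. The paper argues as follows.
\begin{itemize}
\item \emph{Upper bound.} By \cite[Lemma 5.29]{V}, every $\oo_G^\st\in\bar\NN_\theta(\tl\pi_\psi)^{\max}$ has a nonzero degenerate Whittaker model. Hence $\oo_G^\st\le d_{G^\vee}(\Ad(G^\vee)N_\psi)$ by \cite{MW}.
\item \emph{Lower bound.} \cite[Theorem 4.1]{K} expresses the leading twisted coefficient as the trace of $\tl\pi_\psi(\theta)$ on the corresponding degenerate Whittaker space. That space is one-dimensional by the multiplicity one result of \cite{MW} (cf.\ \cite[Proposition 4.4]{K}). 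So the trace is $\pm1\neq0$, and $d_{G^\vee}(\Ad(G^\vee)N_\psi)\in\bar\NN_\theta(\tl\pi_\psi)^{\max}$.
\item \emph{Rational orbits.} Only the independence of the coefficient from the rational orbit is taken from \cite[Theorem 4.1(2), Remark 4.2]{K}.
\end{itemize}
In your version the upper bound has no justification at all. The nonvanishing rests on a claim you attribute to Konno, that the twisted coefficients are $\pm c_{\oo_G^\st}(\pi_\psi)$ up to a nonzero constant. That is not what his theorem provides. A trace of $\tl\pi_\psi(\theta)$ on a space of dimension larger than one could well vanish, so the Whittaker-space interpretation together with multiplicity one is essential.

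\textbf{Step 3 is also wrong as justified.} The transpose of a symplectic or orthogonal partition need not be of the required type. For instance, take $H=\SO_5$, so $\g^\theta=\mathfrak{sp}_4$, and let $N_\psi$ have Jordan type $(2,1,1)$ in $\Sp_4(\C)$. Then $d_{G^\vee}(\Ad(G^\vee)N_\psi)$ has partition $(3,1)$, which does not meet $\mathfrak{sp}_4$.

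In each of the cases (B), (C), (D), $d_{G^\vee}(\Ad(G^\vee)N_\psi)$ meets $\g^\theta$ exactly when $N_\psi$ is special; this is automatic if $N_\psi=0$. Your fallback is circular, because Konno's statement concerns orbits in $\g^\theta$ to begin with. The paper leaves this point implicit. It is satisfied in the application in Theorem \ref{transfer}, where $N_{\psi_0}$ lies in the special orbit $d_H(\oo_H^\st)$.
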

\begin{proof}
Recall that $\bar\NN(\pi_\psi)^{\max} = \{d_{G^\vee}(\Ad(G^\vee) N_\psi)\}$ 
by M{\oe}glin--Waldspurger \cite{MW} as explained in \cite[Theorem 1.4]{CK}.
Then Varma showed in \cite[Lemma 5.29]{V} that 
for any $\oo_G^\st \in \bar\NN_\theta(\tl\pi_\psi)^{\max}$, 
the corresponding degenerate Whittaker model is nonzero. 
Hence we have
\[
\oo_G^\st \leq d_{G^\vee}(\Ad(G^\vee) N_\psi).
\]
On the other hand, by Konno \cite[Theorem 4.1]{K}
together with the multiplicity one result of degenerate Whittaker models by M{\oe}glin--Waldspurger \cite{MW}
(cf., see \cite[Proposition 4.4]{K}),
we have
\[
d_{G^\vee}(\Ad(G^\vee) N_\psi) \in \bar\NN_\theta(\tl\pi_\psi)^{\max}. 
\]
Hence we obtain the first assertion.
The last assertion follows from \cite[Theorem 4.1(2), Remark 4.2]{K}.
\end{proof}

\subsection{Transfer of nilpotent orbital integrals}
To compare the local character expansions of $S\Theta_\psi$ and $\Theta_{\tl\pi_\psi}$, 
we establish a transfer result as follow. 

\begin{thm}\label{transfer}
Let $\oo_H^\st \in \NN(\h(\overline{F}))$ be a special nilpotent orbit.
Define $\oo_G^\st \in \NN(\g^\theta(\overline{F}))$ by 
\[
\oo_G^\st = d_{G^\vee}(\Ad(G^\vee)d_{H}(\oo_H^\st)). 
\]
Then for each rational orbit $\oo_H \subset \oo_H^\st$, 
there exists a constant $\gamma_{\oo_H} \in \C$ such that 
\[
\sum_{\oo_G \subset \oo_G^\st} \hat\mu_{\oo_G}
\quad\text{is equal to the transfer of }\quad
\sum_{\oo_H \subset \oo_H^\st} \gamma_{\oo_H} \hat\mu_{\oo_H}
\]
on a small enough neighborhood of $\theta \in G(F) \rtimes \theta$.
\end{thm}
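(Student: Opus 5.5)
The plan is to obtain the identity indirectly, by comparing local character expansions on the two sides of the twisted endoscopic character identity (T1) for a well-chosen $A$-parameter. That parameter should be one whose $H$-side wavefront set is already known from the unipotent case \cite{CMBO2}. For this choice, the desired transfer identity should appear as the leading homogeneous term of the expansion.

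\textbf{Step 1: choice of parameter.} Since $\oo_H^\st$ is special, $\oo_{H^\vee} := d_H(\oo_H^\st)$ satisfies $d_{H^\vee}(\oo_{H^\vee}) = \oo_H^\st$. Choose a tempered $L$-parameter $\phi \in \Phi_\temp(H(F))$ that is trivial on $W_F$ and satisfies $N_\phi \in \oo_{H^\vee}$. This is possible by Jacobson--Morozov, taking $\phi|_{\SL_2(\C)}$ to be the corresponding $\sl_2$-triple. Then $\phi$ is unipotent with real infinitesimal character. Put $\psi = \widehat\phi$, so that $N_\psi = N_{\widehat\phi} \in \oo_{H^\vee}$. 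Its packet $\Pi_\psi = \{\hat\pi : \pi \in \Pi_\phi\}$ consists of Aubert duals of unipotent representations of the split group $H(F)$.

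For this packet, \cite[Theorem 1.4.1]{CMBO2} (Conjecture \ref{upper} in the unipotent case) gives two facts:
\begin{itemize}
\item every orbit in $\bar\NN(\hat\pi)$, $\pi \in \Pi_\phi$, is $\leq \oo_H^\st$;
\item some $\hat\pi$ actually has $\oo_H^\st$ in its wavefront set.
\end{itemize}
On the twisted side, $d_{G^\vee}(\Ad(G^\vee)N_\psi) = \oo_G^\st$. By Theorem \ref{twist}, $\bar\NN_\theta(\tl\pi_\psi)^{\max} = \{\oo_G^\st\}$, and the coefficients $c_{\oo_G}(\tl\pi_\psi)$ equal a common constant $c$ for all $\oo_G \subset \oo_G^\st$. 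Moreover $c \neq 0$, since $\oo_G^\st$ lies in the twisted wavefront set.

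\textbf{Step 2: compare the expansions.} By (T1), $\Theta_{\tl\pi_\psi}$ is the transfer of $S\Theta_\psi = \sum_\pi \pair{s_\psi,\pi}_\psi \Theta_\pi$. Near the identity, I expand $S\Theta_\psi$ as $\sum_{\oo_H} a_{\oo_H}\hat\mu_{\oo_H}$, where each $a_{\oo_H}$ is a signed sum of the $c_{\oo_H}(\pi)$. I then transfer this term by term.

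To separate the terms, I will use homogeneity. For $p$ large as in Theorem \ref{main}, the exponential map and Waldspurger's Lie algebra version of (twisted) transfer apply near $1$ and near $\theta$. Under the dilation $X \mapsto t^2X$, each $\hat\mu_{\oo}$ is homogeneous of degree $-\tfrac12\dim\oo$, on both the standard and the twisted side. Lie algebra transfer commutes with these dilations, up to normalizing the twisted dimension as in Clozel. Since homogeneous distributions of different degrees are linearly independent, both sides of the identity decompose by degree, and transfer respects this decomposition.

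\textbf{Step 3: isolate the top degree.} On the $H$-side, every orbit with $a_{\oo_H} \neq 0$ is $\leq \oo_H^\st$, so it is either contained in $\oo_H^\st$ or has strictly smaller dimension. On the $G$-side, every orbit other than those in $\oo_G^\st$ has strictly smaller dimension. Matching the top degree therefore gives that $c\sum_{\oo_G \subset \oo_G^\st}\hat\mu_{\oo_G}$ is the transfer of $\sum_{\oo_H \subset \oo_H^\st} a_{\oo_H}\hat\mu_{\oo_H}$. Setting $\gamma_{\oo_H} = a_{\oo_H}/c$ yields the theorem.

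\textbf{Main obstacle.} The delicate point is the degree bookkeeping in Steps 2--3. One must check that the homogeneity degree attached to $\oo_G^\st$ on the twisted side agrees with the degree of $\oo_H^\st$ under the twisted transfer normalization, that is, a dimension identity relating $\dim \oo_G^\st$ and $\dim \oo_H^\st$. If the degrees did not match, the top-degree comparison would instead force $c = 0$, a contradiction. So the first thing I would try is to argue exactly this way: $c \neq 0$, together with the fact that lower-degree terms on each side cannot contribute to the top degree on the other, forces the degrees to agree. This argument needs the transfer of the $H$-side top-degree piece to be nonzero, or at least that transfer does not create terms of new degrees. I expect this to follow from the dilation-equivariance of transfer combined with the injectivity statement in Remark \ref{OvsSO}(2), but it is where I would look most carefully.
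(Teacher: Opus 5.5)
You are following the paper's route: the same anti-tempered unipotent parameter $\psi=\widehat\phi$, the same inputs (\cite[Theorem 1.4.1]{CMBO2}, Theorem \ref{twist}, (T1)), and the same lowest-degree comparison via homogeneity. However, the step you flag as the main obstacle is a genuine gap as written.

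Matching the top degrees requires knowing that the $\oo_H^\st$-part $\sum_{\oo_H\subset\oo_H^\st}a_{\oo_H}\hat\mu_{\oo_H}$ of $S\Theta_\psi$ is nonzero. Your coefficients $a_{\oo_H}=\sum_\pi\pair{s_\psi,\pi}_\psi c_{\oo_H}(\pi)$ are signed sums, so you have no control over this. Suppose cancellation killed it. Then $c\neq0$ would only tell you that the $\oo_G^\st$-piece is the transfer of the component attached to strictly smaller special orbits. The asserted identity would then fail: its left side is nonzero, while the transfer of anything supported on $\oo_H^\st$ lands in the wrong degree. Your claim that ``$c\neq 0$ forces the degrees to agree'' gives only one inequality: the $H$-degree feeding the $\oo_G^\st$-piece comes from orbits of dimension $\leq\dim\oo_H^\st$. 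Injectivity of transfer (Remark \ref{OvsSO}(2)) supplies the reverse inequality only once the $H$-side top piece is known to be nonzero, and that is exactly what is missing. Dilation-equivariance alone cannot supply it.

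The paper closes this with two facts you did not use.
\begin{enumerate}
\item $s_\psi=1$ in $\Sc_\psi$. Since $\psi$ is trivial on $W_F\times\SL_2(\C)\times\{\1\}$, $s_\psi=\psi(\1,\1,-\1)$ acts by $(-1)^{b-1}$ on each summand $S_b$. The parity and multiplicity constraints on these summands put $s_\psi$ in $Z(H^\vee)Z_\psi^\circ$. Hence $S\Theta_\psi=\sum_{\pi\in\Pi_\psi}\Theta_\pi$, and $a_{\oo_H}=\sum_\pi c_{\oo_H}(\pi)$ with no signs.
\item By \cite{CMBO2}, $\oo_H^\st$ is maximal in $\bar\NN(\pi)$ whenever it occurs there. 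So M{\oe}glin--Waldspurger \cite{MW} gives $c_{\oo_H}(\pi)\geq 0$ for every $\oo_H\subset\oo_H^\st$, as the dimension of a degenerate Whittaker model, and this is strictly positive for some $\pi$ and some $\oo_H$.
\end{enumerate}
Thus the lowest-degree piece of $S\Theta_\psi$ is a nonzero stable distribution. By injectivity its transfer is nonzero, and the two inequalities force the degrees to match. After that your Step 3 goes through and yields $\gamma_{\oo_H}=a_{\oo_H}/c$.

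An alternative fix would be a direct check that the homogeneity degrees of $\oo_H^\st$ and $\oo_G^\st$ correspond under the degree shift of the twisted transfer. That would itself force the $H$-side top piece to be nonzero, but your proposal does not carry out this check either.
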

\begin{proof}
Consider the unique $A$-parameter $\psi_0 \in \Psi_\temp(H(F))$
such that $\psi_0$ is trivial on $W_F \times \SL_2(\C) \times \{1\}$ and 
\[
\Ad(H^\vee) N_{\psi_0} = d_H(\oo_H^\st). 
\]
If we regard $\psi_0$ as an $L$-parameter for $G(F)$ 
via the standard representation $H^\vee \hookrightarrow G^\vee$, 
then the corresponding $\Ad(G^\vee)$-orbit in $\NN(\g^\vee)$ is 
$\Ad(G^\vee)N_{\psi_0} = \Ad(G^\vee)d_{H}(\oo_H^\st)$.
\par

We can apply \cite[Theorem 1.4.1]{CMBO2} to each $\pi \in \Pi_{\psi_0}$, 
and we obtain that
\[
\bar\NN(\pi)^{\max} = \{d_{H^\vee}(\Ad(H^\vee) N_{\psi_0})\} 
= \{d_{H^\vee} (d_H(\oo_H^\st))\} = \{\oo_H^\st\}. 
\]
Here, the last equation follows since $\oo_H^\st$ is special. 
In particular, by a result of M{\oe}glin--Waldspurger \cite{MW},
for each rational orbit $\oo_H \subset \oo_H^\st$, 
the coefficient $c_{\oo_H}(\pi)$ of $\hat\mu_{\oo_H}$ in the local character expansion of $\Theta_\pi$
is a non-negative real number, and it is nonzero for some $\oo_H \subset \oo_H^\st$. 
We set
\[
c_{\oo_H} = \sum_{\pi \in \Pi_{\psi_0}} c_{\oo_{H}}(\pi).
\]
Then there is a rational orbit $\oo_H \subset \oo_H^\st$ such that $c_{\oo_H} \not= 0$.
\par

Note that $s_{\psi_0} = 1$ in $\Sc_{\psi_0}$. 
Hence 
\[
\Theta_{\tl\pi_{\psi_0}} 
= \sum_{\oo_G \in \NN(\g^\theta(F))} c_{\oo_G}(\tl\pi_{\psi_0}) \hat\mu_{\oo_G}
\]
is the transfer of 
\[
S\Theta_{\psi_0} = \sum_{\pi \in \Pi_{\psi_0}} \Theta_\pi
= \sum_{\oo_H \in \NN(\h(F))} c_{\oo_H} \hat\mu_{\oo_H}.
\]
By the same argument as in the proof of \cite[Theorem 8.4]{K}
(cf., see also \cite[Remark 6.10]{V} and \cite[XII.8.8. Lemma]{W_pave}),
the homogeneous property implies a transfer result for each homogeneous components of $\Theta_{\tl\pi_{\psi_0}}$ and $S\Theta_{\psi_0}$
(after shifting the degrees).
The lowest degree term of $\Theta_{\tl\pi_{\psi_0}}$ corresponds to its wavefront set, 
which is
\[
d_{G^\vee}(\Ad(G^\vee)N_{\psi_0}) = d_{G^\vee}(\Ad(G^\vee)d_{H}(\oo_H^\st)) = \oo_G^\st
\]
by Theorem \ref{twist}. 
On the other hand, the lowest degree term of $S\Theta_{\psi_0}$ corresponds to $\oo_H^\st$. 
Therefore, by comparing the lowest degree parts, 
we see that 
\[
\sum_{\oo_G \subset \oo_G^\st} c_{\oo_G}(\tl\pi_{\psi_0}) \hat\mu_{\oo_G}
\quad\text{is equal to the transfer of}\quad
\sum_{\oo_H \subset \oo_H^\st} c_{\oo_H} \hat\mu_{\oo_H}
\]
on a small enough neighborhood of $\theta \in G(F) \rtimes \theta$.
Since $c_{\oo_G}(\tl\pi_{\psi_0})$ is a nonzero constant depending only on $\oo_G^\st$ (and $\tl\pi_{\psi_0}$), 
we obtain the assertion. 
\end{proof}

More generally, we expect the following assumption to hold; 
it should be possible that this could be proven by a similar argument as \cite[XII.9 Th\'eor\`eme]{W_pave}.

\begin{hyp}\label{hypo}
Let $\oo_H^\st \in \NN(\h(\overline{F}))$ be a special nilpotent orbit.
Define $\oo_G^\st \in \NN(\g^\theta(\overline{F}))$ by 
\[
\oo_G^\st = d_{G^\vee}(\Ad(G^\vee)d_{H}(\oo_H^\st)). 
\]
Then for any constants $\{c_{\oo_H} \;|\; \oo_H \subset \oo_H^\st\}$ such that 
$\sum_{\oo_H \subset \oo_H^\st} c_{\oo_H} \hat\mu_{\oo_H}$ is stable, 
there exist constants $\{a_{\oo_G} \;|\; \oo_G \subset \oo_G^\st\}$ such that 
\[
\sum_{\oo_G \subset \oo_G^\st} a_{\oo_G} \hat\mu_{\oo_G}
\quad\text{is equal to the transfer of}\quad
\sum_{\oo_H \subset \oo_H^\st} c_{\oo_H} \hat\mu_{\oo_H} 
\]
on a small enough neighborhood of $\theta \in G(F) \rtimes \theta$.
\end{hyp}

\subsection{Wavefront sets for $A$-packets}
Let $\psi \in \Psi(H(F))$, 
and we regard $\psi$ as an $A$-parameter for $G(F)$ 
via the standard representation $H^\vee \hookrightarrow G^\vee$. 
By \cite[Theorem 2.2.1]{Ar}, we know that
\[
\Theta_{\tl\pi_\psi} 
= \sum_{\oo_G \in \NN(\g^\theta(F))} c_{\oo_G}(\tl\pi_\psi) \hat\mu_{\oo_G}
\]
is the transfer of 
\[
S\Theta_\psi = \sum_{\pi \in \Pi_\psi} \pair{s_\psi,\pi}_\psi \Theta_\pi
= \sum_{\oo_H \in \NN(\h(F))} c_{\oo_H}(\psi) \hat\mu_{\oo_H}, 
\]
where we set
\[
c_{\oo_H}(\psi) = \sum_{\pi \in \Pi_\psi} \pair{s_\psi,\pi}_\psi c_{\oo_H}(\pi).
\]
We also write 
\[
S\Theta_\psi = \sum_{\oo_H^\st \in \NN(\h(\overline{F}))} 
\left( \sum_{\oo_H \subset \oo_H^\st} c_{\oo_H}(\psi) \hat\mu_{\oo_H}\right).
\]
Since $S\Theta_\psi$ is stable, by \cite[IX.15 Th\'eor\`eme]{W_pave}, 
only special nilpotent orbits $\oo_H^\st \in \NN(\h(\overline{F}))$ can contribute in this sum.
Set $\NN(\psi) = \{ \oo_H \in \NN(\h(F)) \;|\; c_{\oo_H}(\psi)\not= 0\}$ 
and let $\bar\NN(\psi)$ be the set of $\oo_H^\st \in \NN(\h(\overline{F}))$
such that $\oo_H^\st$ contains a rational orbit $\oo_H$ belonging to $\NN(\psi)$.
\par

As a consequence of Theorem \ref{transfer}, we obtain the following. 
\begin{cor}\label{cor1}
We have
\[
d_{H^\vee}(\Ad(H^\vee) N_\psi) \in \bar\NN(\psi).
\]
Moreover, for $\oo_H^\st \in \bar\NN(\psi)$, 
if $\oo_H^\st \not= d_{H^\vee}(\Ad(H^\vee) N_\psi)$, then 
\[
\dim(\oo_H^\st) < \dim(d_{H^\vee}(\Ad(H^\vee) N_\psi)).
\]
\end{cor}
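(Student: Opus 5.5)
The plan is to compare the homogeneous components of the two local character expansions linked by \textbf{(T1)}, namely $\Theta_{\tl\pi_\psi}$ on $G(F)\rtimes\theta$ and $S\Theta_\psi$ on $H(F)$. As in the proof of Theorem \ref{transfer}, I will use the homogeneity argument of \cite[Theorem 8.4]{K}: the Fourier transforms $\hat\mu_{\oo_H}$ are homogeneous of degree $-\tfrac12\dim\oo_H^\st$, and the $\hat\mu_{\oo_G}$ are homogeneous of degree $-\tfrac12\dim\oo_G^\st$. After the appropriate degree shift, transfer respects homogeneous components. Concretely, for each $d$ the twisted transfer of
\[
\sum_{\dim\oo_H^\st=d}\ \sum_{\oo_H\subset\oo_H^\st}c_{\oo_H}(\psi)\hat\mu_{\oo_H}
\]
equals the homogeneous component of $\Theta_{\tl\pi_\psi}$ of matching degree. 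The degree matching is the relation $\dim \oo_G^\st=\dim \oo_H^\st+(\dim G-\dim H)$ for $\oo_G^\st= d_{G^\vee}(\Ad(G^\vee)d_H(\oo_H^\st))$, which is the same relation used implicitly in Theorem \ref{transfer}.

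Set $\oo_H^{\st,0}=d_{H^\vee}(\Ad(H^\vee)N_\psi)$. By Remark \ref{OvsSO}(3), the induced orbit $\oo_G^{\st,0}=d_{G^\vee}(\Ad(G^\vee)d_H(\oo_H^{\st,0}))$ equals $d_{G^\vee}(\Ad(G^\vee)N_\psi)$, because $d_H(d_{H^\vee}(\Ad(H^\vee)N_\psi))$ is obtained from $N_\psi$ by a duality closure compatible with inclusion into $\GL_m$. I expect this compatibility, namely that the special piece of $N_\psi$ for $H^\vee$ and the orbit of $N_\psi$ in $G^\vee$ have the same $G$-dual, to need a combinatorial check on partitions: one must verify that $H$-collapse followed by transpose and $H$-collapse agrees with plain transpose after passing through $d_{G^\vee}$. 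By Theorem \ref{twist}, the twisted expansion of $\tl\pi_\psi$ has lowest degree exactly at $\oo_G^{\st,0}$, with all coefficients on rational orbits there equal and nonzero.

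For the dimension bound, I argue by contradiction. Suppose the set $\bar\NN(\psi)$, which consists of special orbits by \cite[IX.15 Th\'eor\`eme]{W_pave}, contains some $\oo_H^\st\neq\oo_H^{\st,0}$ with $\dim\oo_H^\st\ge\dim\oo_H^{\st,0}$. Take the maximal such dimension $d$. The degree-$d$ component of $S\Theta_\psi$ is stable, since $S\Theta_\psi$ is stable and stability is preserved by homogeneous projection. It is also nonzero, because distinct orbits give linearly independent $\hat\mu$. Then injectivity of the transfer on stable distributions (Remark \ref{OvsSO}(2)) forces a nonzero component of $\Theta_{\tl\pi_\psi}$ in degree corresponding to dimension $\ge\dim\oo_G^{\st,0}$.

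Two cases arise. If the degree is strictly larger, this contradicts $\bar\NN_\theta(\tl\pi_\psi)^{\max}=\{\oo_G^{\st,0}\}$. If the dimensions are equal, the degree-$d$ piece of $\Theta_{\tl\pi_\psi}$ is supported on $\oo_G^{\st,0}$ only, and by Theorem \ref{twist} it equals $c\sum_{\oo_G\subset\oo_G^{\st,0}}\hat\mu_{\oo_G}$. Theorem \ref{transfer} says this is also the transfer of $c\sum\gamma_{\oo_H}\hat\mu_{\oo_H}$ over $\oo_H\subset\oo_H^{\st,0}$. That distribution is stable, being the lowest term of $S\Theta_{\psi_0}$. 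Injectivity then gives equality of the degree-$d$ components on $H$, so no orbit other than $\oo_H^{\st,0}$ of that dimension contributes, again a contradiction. The same identification, using $c\neq0$ and the fact that some $\gamma_{\oo_H}\ne0$ (from the proof of Theorem \ref{transfer}), shows that $\oo_H^{\st,0}$ actually occurs in $\bar\NN(\psi)$, which proves the first assertion.

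The main obstacle is the degree bookkeeping. I must check that twisted transfer sends homogeneous distributions of $H$-orbit dimension $d$ to those of the corresponding $G$-dimension, uniformly across all special orbits, and not just for $\oo_H^{\st,0}$. To do this I would use the uniform relation between the scaling actions on $\h$ and $\g^\theta$ from \cite[XII.8.8. Lemma]{W_pave}, under which the degree shift depends only on $\dim G-\dim H$. Granting that, the dimension comparison $\dim\oo_G^\st$ versus $\dim\oo_H^\st$ becomes monotone, and the argument above goes through.
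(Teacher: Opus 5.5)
The step that fails is the identity in your second paragraph, $d_{G^\vee}(\Ad(G^\vee)d_H(\oo_H^{\st,0}))=d_{G^\vee}(\Ad(G^\vee)N_\psi)$. Three facts settle it:
\begin{enumerate}
\item $d_H\circ d_{H^\vee}$ sends an orbit to its special closure;
\item $d_{G^\vee}$ is transposition of partitions, hence injective;
\item Remark \ref{OvsSO}(3) says distinct $H^\vee$-orbits remain distinct in $G^\vee$.
\end{enumerate}
Together these show the identity holds \emph{if and only if} $\Ad(H^\vee)N_\psi$ is special. So Remark \ref{OvsSO}(3) argues against your claim, and no partition check can prove it in general.

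For a concrete failure, take $H=\SO_5$ and let $\psi$ be the sum of the $2$-dimensional irreducible representation of the second $\SL_2(\C)$ and $\chi\oplus\chi^{-1}$, with $\chi$ a unitary character of $W_F$. Then $N_\psi$ has Jordan type $(2,1,1)$. We get $\oo_H^{\st,0}=(((2,1,1)^t)^+)_B=(4,1)_B=(3,1,1)$ and $d_H(\oo_H^{\st,0})=(((3,1,1)^t)^-)_C=(3,1)_C=(2,2)$. So your $\oo_G^{\st,0}$ is $(2,2)$, whereas $d_{G^\vee}(\Ad(G^\vee)N_\psi)=(3,1)$.

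Everything you build on this is therefore unsupported. That includes the identification of the top twisted piece of $\Theta_{\tl\pi_\psi}$ (from Theorem \ref{twist}) with the distribution produced by Theorem \ref{transfer}. It also includes the degree matching between that piece and $\oo_H^{\st,0}$. Your equal-dimension case and your proof of the first assertion both rest on exactly this.

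A smaller slip: the uniform degree shift is $\dim\g^\theta-\dim\h$, measured with $G^\theta$-orbit dimensions in $\g^\theta$ (these govern the homogeneity of $\hat\mu_{\oo_G}$). It is not $\dim G-\dim H$. For $H=\SO_3$, $G=\GL_2$, both regular orbits have dimension $2$.

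The paper's proof is arranged so this comparison never arises. Instead of invoking Theorem \ref{transfer} at $\oo_H^{\st,0}$, it takes $\psi_0$ trivial on $W_F\times\SL_2(\C)\times\{\1\}$ with $\Ad(H^\vee)N_{\psi_0}=\Ad(H^\vee)N_\psi$ itself, not its special closure. The argument then runs as follows:
\begin{enumerate}
\item Theorem \ref{twist} gives $\tl\pi_\psi$ and $\tl\pi_{\psi_0}$ literally the same top twisted orbit, with constant coefficients.
\item A single scalar $e$ cancels that orbit in $\Theta_{\tl\pi_\psi}-e\Theta_{\tl\pi_{\psi_0}}$.
\item Homogeneity and injectivity on stable distributions then force the parts of $S\Theta_\psi$ and $eS\Theta_{\psi_0}$ on orbits of dimension $\ge d$ to coincide. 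The degree matching can be read off from the transfer $S\Theta_{\psi_0}\mapsto\Theta_{\tl\pi_{\psi_0}}$, as in the proof of Theorem \ref{transfer}.
\item Finally, \cite[Theorem 1.4.1]{CMBO2} applied directly to $\Pi_{\psi_0}$ gives top orbit $\oo_H^{\st,0}$ with nonnegative coefficients of nonzero sum. With linear independence of the $\hat\mu_{\oo_H}$, this yields both assertions at once.
\end{enumerate}
If you use this $\psi_0$ in place of your appeal to Theorem \ref{transfer}, the rest of your proposal goes through and becomes essentially the paper's proof. That includes stability of homogeneous projections, injectivity, and the maximal-dimension contradiction. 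When $\Ad(H^\vee)N_\psi$ is special, your identity is trivially true and your proof is fine as written; it just packages the $\psi_0$-comparison inside Theorem \ref{transfer}.

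Incidentally, $d_{G^\vee}(\Ad(G^\vee)N_\psi)$ meets $\g^\theta(\overline{F})$ only when $\Ad(H^\vee)N_\psi$ is special. So in the non-special case, the conclusion of Theorem \ref{twist}, which both arguments apply to general $\psi$, cannot hold verbatim either. That is a limitation of the input, not a justification for your identity.
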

\begin{proof}
Set $d = \dim(d_{H^\vee}(\Ad(H^\vee) N_\psi))$. 
As in the proof of Theorem \ref{transfer}, 
consider the unique $A$-parameter $\psi_0 \in \Psi(H(F))$
such that $\psi_0$ is trivial on $W_F \times \SL_2(\C) \times \{\1\}$ and 
\[
\Ad(H^\vee) N_{\psi_0} = \Ad(H^\vee) N_\psi. 
\]
For a constant $e \in \C^\times$
the transfer of $S\Theta_\psi - eS\Theta_{\psi_0}$ is 
\[
\Theta_{\tl\pi_\psi}-e\Theta_{\tl\pi_{\psi_0}}
= \sum_{\oo_G \in \NN(\g^\theta(F))} \left( c_{\oo_G}(\tl\pi_\psi)-e c_{\oo_G}(\tl\pi_{\psi_0}) \right) \hat\mu_{\oo_G}.
\]
Since $d_{G^\vee}(\Ad(G^\vee) N_\psi) = d_{G^\vee}(\Ad(G^\vee) N_{\psi_0})$,
by Theorem \ref{twist}, we can choose $e \in \C^\times$ such that 
for $\oo_G^\st \in \NN(\g^\theta(\overline{F}))$, 
if $c_{\oo_G}(\tl\pi_\psi)-e c_{\oo_G}(\tl\pi_{\psi_0}) \not= 0$ for some $\oo_G \subset \oo_G^\st$, 
then $\oo_G^\st < d_{G^\vee}(\Ad(G^\vee) N_\psi)$.
This together with Theorem \ref{transfer} implies that 
\[
\sum_{\substack{\oo_H^\st \in \NN(\h(\overline{F})) \\ \dim(\oo_H^\st) \geq d}} 
\left( \sum_{\oo_H \subset \oo_H^\st} c_{\oo_H}(\psi) \hat\mu_{\oo_H}\right)
=
e\sum_{\substack{\oo_H^\st \in \NN(\h(\overline{F})) \\ \dim(\oo_H^\st) \geq d}} 
\left( \sum_{\oo_H \subset \oo_H^\st} c_{\oo_H}(\psi_0) \hat\mu_{\oo_H}\right)
\]
By \cite[Theorem 1.4.1]{CMBO2}, 
in the right-hand side, only $\oo_H^\st = d_{H^\vee}(\Ad(H^\vee) N_\psi)$ can contribute. 
By the linear independence of $\hat\mu_{\oo_H}$ (\cite[Theorem 5.11]{HC}), 
the same holds for the left-hand side.
\end{proof}

Using Hypothesis \ref{hypo}, 
we can refine this corollary as follows. 

\begin{prop}\label{refine}
Assume Hypothesis \ref{hypo}.
Then for any $\oo_H^\st \in \bar\NN(\psi)$, 
we have 
\[
\oo_H^\st \leq d_{H^\vee}(\Ad(H^\vee) N_\psi).
\]
\end{prop}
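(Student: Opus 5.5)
We argue by contradiction and use Hypothesis \ref{hypo} to peel off, one geometric orbit at a time, the stable pieces of $S\Theta_\psi$ that are not bounded by $\oo_{\psi}^\st := d_{H^\vee}(\Ad(H^\vee)N_\psi)$. Suppose the set
\[
X = \{\oo_H^\st \in \bar\NN(\psi) \;|\; \oo_H^\st \not\leq \oo_\psi^\st\}
\]
is nonempty, and choose $\oo_1^\st \in X$ maximal in $\bar\NN(\psi)$ with respect to the closure order. Such a maximal element exists: if $\oo_1^\st \in X$ and $\oo_1^\st \leq \oo'^\st \in \bar\NN(\psi)$, then $\oo'^\st \not\leq \oo_\psi^\st$, so $\oo'^\st$ also lies in $X$, and we may pass to a maximal element. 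By \cite[IX.15 Th\'eor\`eme]{W_pave} the orbit $\oo_1^\st$ is special. Corollary \ref{cor1} gives $\dim \oo_1^\st < \dim \oo_\psi^\st$, so in particular $\oo_1^\st \neq \oo_\psi^\st$.

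The first step is to show that the homogeneous piece $\sum_{\oo_H \subset \oo_1^\st} c_{\oo_H}(\psi)\hat\mu_{\oo_H}$ is itself stable. This follows from the homogeneity argument already used in the proof of Theorem \ref{transfer} (cf.\ \cite[Theorem 8.4]{K}, \cite[XII.8.8]{W_pave}), which splits the stable distribution $S\Theta_\psi$ into homogeneous components of each dimension, each of which is stable. Within a fixed dimension, Waldspurger's analysis in \cite[IX]{W_pave} refines this to the individual geometric orbits, since stability is detected orbit by orbit. We then apply Hypothesis \ref{hypo} to $\oo_1^\st$. It produces constants $a_{\oo_G}$ such that $\sum_{\oo_G \subset \oo_{G,1}^\st} a_{\oo_G}\hat\mu_{\oo_G}$ is the transfer of this piece, where $\oo_{G,1}^\st = d_{G^\vee}(\Ad(G^\vee)d_H(\oo_1^\st))$. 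By the injectivity of transfer on stable distributions (Remark \ref{OvsSO}(2)), the $a_{\oo_G}$ are not all zero.

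The second step is to compare this with the twisted character. Transfer commutes with the homogeneous decomposition, and the transfer of $S\Theta_\psi$ is $\Theta_{\tl\pi_\psi}$. Hence the distribution $\sum a_{\oo_G}\hat\mu_{\oo_G}$ must appear in the expansion of $\Theta_{\tl\pi_\psi}$, modulo the contributions of the other geometric orbits of the same dimension. Those other contributions are transfers of distributions supported on $G$-orbits that differ from $\oo_{G,1}^\st$, so by linear independence of the $\hat\mu_{\oo_G}$ they cannot cancel it. Therefore $\oo_{G,1}^\st \in \bar\NN_\theta(\tl\pi_\psi)$, and Theorem \ref{twist} gives
\[
d_{G^\vee}(\Ad(G^\vee)d_H(\oo_1^\st)) \leq d_{G^\vee}(\Ad(G^\vee)N_\psi).
\]

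The third step is combinatorial. The map $d_{G^\vee}$ is order-reversing on partitions (it is the transpose), so we obtain $\Ad(G^\vee)N_\psi \leq \Ad(G^\vee)d_H(\oo_1^\st)$ in $\GL_m$. Dominance order on classical nilpotent orbits is just dominance order of their partitions, so this inequality restricts to $H^\vee$, giving $\Ad(H^\vee)N_\psi \leq d_H(\oo_1^\st)$; Remark \ref{OvsSO}(3) handles type $D$. Applying the order-reversing map $d_{H^\vee}$ and using that $\oo_1^\st$ is special, we get $\oo_1^\st = d_{H^\vee}d_H(\oo_1^\st) \leq \oo_\psi^\st$. This contradicts $\oo_1^\st \in X$.

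I expect the main obstacle to be the second step: isolating the $\oo_{G,1}^\st$-component inside $\Theta_{\tl\pi_\psi}$. Different special orbits $\oo_H^\st$ of the same dimension could transfer to the same $\oo_G^\st$, and then their contributions might cancel. The first remedy I would try is to note that $\oo_H^\st \mapsto d_{G^\vee}(\Ad(G^\vee)d_H(\oo_H^\st))$ is injective on special orbits, because $d_H$ is injective on special orbits and Remark \ref{OvsSO}(3) holds. With injectivity in hand, the supports of the transferred pieces are distinct and cancellation is impossible.
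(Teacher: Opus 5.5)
Your proof is correct and is essentially the paper's argument. You apply Hypothesis \ref{hypo} to the stable piece of $S\Theta_\psi$ on each special $\oo_H^\st\in\bar\NN(\psi)$, deduce $d_{G^\vee}(\Ad(G^\vee)d_H(\oo_H^\st))\in\bar\NN_\theta(\tl\pi_\psi)$, invoke Theorem \ref{twist}, and pull the inequality back to $H^\vee$ using Remark \ref{OvsSO}(3) and the order reversal of $d_{H^\vee}$.

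Several parts of your setup are unnecessary: the contradiction and maximality framing, the appeal to Corollary \ref{cor1}, and the homogeneity step. Once $\oo_H^\st\mapsto d_{G^\vee}(\Ad(G^\vee)d_H(\oo_H^\st))$ is injective on special orbits, the argument works directly for every $\oo_H^\st\in\bar\NN(\psi)$. Your explicit use of that injectivity, and of injectivity of transfer on stable distributions, is a welcome addition: it spells out points the paper leaves implicit.
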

\begin{proof}
For each special orbit $\oo_H^\st \in \NN(\h(\overline{F}))$, 
let $\{a_{\oo_G}(\psi) \;|\; \oo_G \subset d_{G^\vee}(\Ad(G^\vee)d_{H}(\oo_H^\st))\}$ 
be the constants in Hypothesis \ref{hypo}
given by $\{c_{\oo_H}(\psi) \;|\; \oo_H \subset \oo_H^\st\}$.
Then by considering the transfer of $S\Theta_\psi$, 
we see that the local character expansion of $\Theta_{\tl\pi_\psi}$ is given by
\[
\Theta_{\tl\pi_\psi} = \sum_{\oo_H^\st} 
\sum_{\oo_G \subset \oo_G^\st} a_{\oo_G}(\psi) \hat\mu_{\oo_G}, 
\]
where $\oo_H^\st$ runs over special nilpotent orbits in $\NN(\h(\overline{F}))$
and $\oo_G$ runs over rational orbits contained in
$\oo_G^\st = d_{G^\vee}(\Ad(G^\vee)d_{H}(\oo_H^\st))$.
In particular, if $\oo_H^\st \in \bar\NN(\psi)$, 
then $d_{G^\vee}(\Ad(G^\vee)d_{H}(\oo_H^\st)) \in \bar\NN_\theta(\tl\pi_\psi)$. 
By Theorem \ref{twist}, we have
\[
d_{G^\vee}(\Ad(G^\vee)d_{H}(\oo_H^\st)) \leq d_{G^\vee}(\Ad(G^\vee) N_\psi). 
\]
Hence $\Ad(G^\vee)d_{H}(\oo_H^\st) \geq \Ad(G^\vee) N_\psi$.
Note that 
\[
\Ad(G^\vee)d_{H}(\oo_H^\st) \cap H^\vee = d_{H}(\oo_H^\st), 
\quad
\Ad(G^\vee) N_\psi \cap H^\vee = \Ad(H^\vee) N_\psi. 
\]
(Here, if $H=\SO_{2n}$, we consider $\O_{2n}(\C)$-orbits actually. See Remark \ref{OvsSO}.)
Hence $d_{H}(\oo_H^\st) \geq \Ad(H^\vee) N_\psi$, 
and we conclude that 
$\oo_H^\st \leq d_{H^\vee}(d_{H}(\oo_H^\st)) \leq d_{H^\vee}(\Ad(H^\vee) N_\psi)$.
\end{proof}

\section{Standard endoscopy}
In this section, we consider the standard endoscopy, 
and prove Theorem \ref{main}.
To do this, we will relate the Waldspurger map with the Spaltenstein duality.

\subsection{Waldspurger map}\label{sec.w_map}
Let $H_1 \times H_2$ be an unramified endoscopic group 
of a split classical group $H$. 
We denote by $\xi \colon H_1^\vee \times H_2^\vee \hookrightarrow H^\vee$ the inclusion map. 
\par

For $i \in \{1,2\}$, 
let $\oo_{H_i}^\st \in \NN(\h_i(\overline{F}))$ be a special nilpotent orbit. 
Waldspurger explicitly defined in \cite[Sections XI.6, XI.7]{W_pave}
an orbit
\[
\oo_H^\st = W(\oo_{H_1}^\st, \oo_{H_2}^\st) \in \NN(\h(\overline{F}))
\]
and proved a certain transfer result in \cite[XII.9 Th\'eor\`eme]{W_pave}.
Although his result is quite explicit, 
we only need the following property. 
For any constants $\{c_{\oo_{H_i}} \;|\; \oo_{H_i} \subset \oo_{H_i}^\st\}$ 
such that  
$\sum_{\oo_{H_i} \subset \oo_{H_i}^\st} c_{\oo_{H_i}} \hat\mu_{\oo_{H_i}}$
is stable for $i \in \{1,2\}$, 
there exists a constant $\gamma_{\oo_{H}} = \gamma_{\oo_H}(c_{\oo_{H_1}}, c_{\oo_{H_2}})$ 
for each rational orbit $\oo_{H} \subset \oo_{H}^\st$ such that 
\[
\left( \sum_{\oo_{H} \subset \oo_{H}^\st} \gamma_{\oo_{H}} \hat\mu_{\oo_{H}} \right)
\quad
\text{is the transfer of} 
\quad
\left( \sum_{\oo_{H_1} \subset \oo_{H_1}^\st} c_{\oo_{H_1}} \hat\mu_{\oo_{H_1}} \right)
\otimes
\left( \sum_{\oo_{H_2} \subset \oo_{H_2}^\st} c_{\oo_{H_2}} \hat\mu_{\oo_{H_2}} \right).
\]
Note that in \cite[X. 15 Th\'eor\`eme]{W_pave}, 
Waldspurger determined explicitly 
when $\sum_{\oo_{H_i} \subset \oo_{H_i}^\st} c_{\oo_{H_i}} \hat\mu_{\oo_{H_i}}$ is stable. 
\par

We need the following proposition, 
which will be proven in Section \ref{sec.proof_WS}.
\begin{prop}\label{WS}
For $i \in \{1,2\}$, let $\oo_{H_i}^\st \in \NN(\h_i(\overline{F}))$ be a special nilpotent orbit.
Then 
\[
W(\oo_{H_1}^\st, \oo_{H_2}^\st) 
\leq d_{H^\vee}(\Ad(H^\vee)\xi(d_{H_1}(\oo_{H_1}^\st), d_{H_2}(\oo_{H_2}^\st))). 
\]
\end{prop}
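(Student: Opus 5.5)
The plan is to reduce Proposition \ref{WS} to a statement about partitions and then prove that statement by comparing dominance orders. First I would fix the combinatorial dictionary case by case (types (B), (C), (D)).
\begin{itemize}
\item A nilpotent orbit of a classical group over $\overline{F}$ is a partition $\lambda$ of the right parity type, and the closure order is the dominance order.
\item The Spaltenstein duality maps are given by the Barbasch--Vogan recipe: transpose, then take the appropriate collapse, after adding or removing a part $1$ when passing between types $B$ and $C$ (see \cite[Appendix A]{BV}).
\item For the inclusion $\xi \colon H_1^\vee\times H_2^\vee\hookrightarrow H^\vee$ coming from the standard representations, $\xi(\mu_1,\mu_2)$ is the union (concatenation) $\mu_1\cup\mu_2$, for either the $\Ad(H^\vee)$-orbit or the $\O_{2n}(\C)$-orbit (cf.\ Remark \ref{OvsSO}).
\end{itemize}
With this dictionary the right-hand side of the proposition is computable. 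Since $(\mu_1\cup\mu_2)^t=\mu_1^t+\mu_2^t$ (row-wise sum), it equals the $H$-collapse of $d_{H_1}(\lambda_1)^t+d_{H_2}(\lambda_2)^t$, up to the $\pm 1$ adjustments. For special $\lambda_i$ the transpose $d_{H_i}(\lambda_i)^t$ is, up to the same adjustments, $\lambda_i$ itself or its $H_i$-collapse. So the right-hand side is essentially the $H$-collapse of $\lambda_1+\lambda_2$, corrected by the parity conventions for the factor types.

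The second step is to unwind Waldspurger's definition of $W(\oo_{H_1}^\st,\oo_{H_2}^\st)$ from \cite[Sections XI.6, XI.7]{W_pave} in the same language.
\begin{itemize}
\item Waldspurger attaches to special orbits symbols, or pairs of partitions with parity data. Equivalently, the construction should be a row-wise sum $\lambda_1+\lambda_2$, possibly with a shift of one factor by the parity twist of the orthogonal factor, followed by a specialization procedure to land in a special orbit of $H$.
\item I would rewrite his recipe as an explicit partition $\lambda=W(\lambda_1,\lambda_2)$.
\item I would then check the inequality $\lambda \le \bigl(d_{H_1}(\lambda_1)^t+d_{H_2}(\lambda_2)^t\bigr)_{H}$ by comparing partial sums $\sum_{j\le k}\lambda_j$.
\end{itemize}
The tools for this comparison are standard: collapse is the largest orbit of type $H$ dominated by a given partition; transpose reverses dominance; row-wise sum is monotone in each variable. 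Hence it suffices to show that the "pre-collapse'' partition in Waldspurger's recipe is dominated by $d_{H_1}(\lambda_1)^t+d_{H_2}(\lambda_2)^t$, and that his specialization step does not increase the orbit beyond the collapse.

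I expect the main obstacle to be this last matching. Waldspurger's construction is phrased through symbols and the unramified endoscopic data: the factor $H_2$ may be a quasi-split $\SO_{2b}$ with a quadratic unramified twist, and type $B$ has endoscopic groups $\SO_{2a+1}\times\SO_{2b+1}$ with dual $\Sp\times\Sp$. Correspondingly, the $\pm1$ shifts in the Barbasch--Vogan formulas differ between the two factors, and in type $B$ the two $1$'s must be removed and then one restored, which is where an off-by-one loss of dominance could occur. My first attempt would be to translate both sides into Lusztig symbols and use the known description of $d_H$ on symbols (interchange of the two rows after transposition). Then $W$ should become an explicit merging of the two symbols, and the inequality can be checked entry-wise. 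If that fails, I would instead use that both sides are special orbits and compare them via Lusztig--Spaltenstein induction, viewing $\xi(d_{H_1}(\lambda_1),d_{H_2}(\lambda_2))$ as lying in a pseudo-Levi of $H^\vee$.
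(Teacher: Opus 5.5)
\textbf{There is a genuine gap: the combinatorial inequality that carries the proof is never established, and the simplifications you propose for it are incorrect.}

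Your first reduction is sound. Since $W(\lambda_1,\lambda_2)$ already has the parity type of $H$, maximality of the collapse shows that the proposition is equivalent to $W(\lambda_1,\lambda_2)\le (d(\lambda_1)^t+d(\lambda_2)^t)^{\pm}$ (with no $\pm$ in type $D$). This is in turn equivalent to the paper's target $d(W(\lambda_1,\lambda_2))\ge d(\lambda_1)\cup d(\lambda_2)$.

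The argument stops exactly where the content begins: you defer the partial-sum comparison as the ``main obstacle''. Two of the simplifications you rely on are wrong.
\begin{enumerate}
\item Waldspurger's orbit has no specialization step. $W(\lambda_1,\lambda_2)=\lambda_1+\lambda_2+\xi$, with an explicit correction $\xi_j\in\{0,\pm1\}$ determined by parities and strict drops. It is in general not special.
\item $d(\lambda_i)^t$ is not ``$\lambda_i$ up to the $\pm1$ adjustment, or its collapse''. Because $d(\lambda_i)$ is itself a collapse, $d(\lambda_i)^t$ dominates the adjusted $\lambda_i$, often strictly, and the proof needs that extra room.
\end{enumerate}

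Here is an example. Take type $(B,B)$ with $H=\SO_{13}$, $H_1\times H_2=\SO_9\times\SO_5$, and the special partitions $\lambda_1=(5,3,1)$, $\lambda_2=(3,1,1)$.
\begin{enumerate}
\item Waldspurger's recipe gives $\xi=(-1,1,-1)$, so $W(\lambda_1,\lambda_2)=(7,5,1)$.
\item One computes $d(\lambda_1)=(2,2,2,2)$ and $d(\lambda_2)=(2,2)$. The right-hand side is therefore $((6,6)^+)_B=(7,5,1)$, so equality holds.
\item Your dictionary replaces $d(\lambda_i)^t$ by the adjusted partitions $(4,3,1)$ and $(2,1,1)$. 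This is also what ``the $H$-collapse of $\lambda_1+\lambda_2$'' gives. The result is $((6,4,2)^+)_B=(7,3,3)$, which does not dominate $(7,5,1)$.
\end{enumerate}
So the approximations you describe cannot yield the inequality, even though the inequality itself is true.

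What is missing is a way to compute the duals exactly enough to absorb the corrections $\xi_j$. The paper does this by induction on $d$.
\begin{enumerate}
\item It first strips pairs of columns of $\lambda_i$ whose length is $\not\equiv d \bmod 2$. This leaves $\xi$ unchanged and adds $(k,k)$ to both $d(\lambda)$ and $d(\lambda_i)$.
\item It then peels off the block of $l=\min\{l_1,l_2\}$ equal rows, at the bottom for orthogonal $H$ and at the top for symplectic $H$.
\item The case $l$ odd is computed directly.
\item For $l$ even it shows $d(\lambda)=(d(\lambda')+(l,\dots,l))_X$ or $(d(\lambda')+(l,\dots,l,l-1,1))_X$, according to the parity indicator $\delta$, with matching formulas for $d(\lambda_i)$.
\item It concludes with $(\alpha_1\cup\alpha_2)+(\beta_1\cup\beta_2)\ge(\alpha_1+\beta_1)\cup(\alpha_2+\beta_2)$ and maximality of the collapse.
\end{enumerate}

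Your fallback via symbols is plausible in principle, since by Lemma~\ref{l:wal} the Springer representation of $W(\lambda_1,\lambda_2)$ is $(\alpha+\alpha')\times(\beta+\beta')$. But that representation need not be special. You would first have to pass to the special member of its family before applying duality, and none of this is carried out.
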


The dimension of $W(\oo_{H_1}^\st, \oo_{H_2}^\st)$ is given in \cite[XI.16 Lemma]{W_pave}.
\begin{lem}\label{dimW}
For $i \in \{1,2\}$, let $\oo_{H_i}^\st \in \NN(\h_i(\overline{F}))$ be a special nilpotent orbit.
Then 
\[
\dim(W(\oo_{H_1}^\st, \oo_{H_2}^\st))
= \dim(\oo_{H_1}^\st) + \dim(\oo_{H_2}^\st) + \dim(\h)-\dim(\h_1)-\dim(\h_2).
\]
\end{lem}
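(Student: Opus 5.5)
Since the lemma is quoted from \cite[XI.16 Lemma]{W_pave}, the plan is to recover it from Waldspurger's description of $W(\oo_{H_1}^\st, \oo_{H_2}^\st)$ rather than from explicit orbit dimensions. First rewrite the claim in terms of centralizers. Since $H_1 \times H_2$ is an endoscopic group of $H$, we have $\mathrm{rank}(H) = \mathrm{rank}(H_1) + \mathrm{rank}(H_2)$. Hence the identity to prove is equivalent to
\[
\dim(\h) - \dim(W(\oo_{H_1}^\st, \oo_{H_2}^\st)) = \bigl(\dim(\h_1) - \dim(\oo_{H_1}^\st)\bigr) + \bigl(\dim(\h_2) - \dim(\oo_{H_2}^\st)\bigr).
\]
In words: the centralizer dimension of the Waldspurger orbit is the sum of the centralizer dimensions of the two input orbits. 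This is the familiar shape of a Lusztig--Spaltenstein induction formula, now with the endoscopic group playing the role of the Levi subgroup.

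To prove this centralizer identity I would pass to the Springer correspondence. For a nilpotent orbit $\oo$ with Springer representation $E_\oo$ (trivial local system), $\dim(\h) - \dim(\oo) = \mathrm{rank} + 2 b(E_\oo)$, where $b(E_\oo)$ is the fake degree (the lowest degree of $E_\oo$ in the coinvariant algebra), which equals the dimension of the Springer fiber. Waldspurger's construction in \cite[XI.6, XI.7]{W_pave}, stated combinatorially via symbols, should be identified with the following: the Springer representation of $W(\oo_{H_1}^\st, \oo_{H_2}^\st)$ is the truncated induction $j_{W_{H_1} \times W_{H_2}}^{W_H}(E_{\oo_{H_1}} \boxtimes E_{\oo_{H_2}})$ from the reflection subgroup $W_{H_1} \times W_{H_2} \subset W_H$ (possibly after twisting by sign, depending on normalization).

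The key step is that truncated induction preserves the $b$-invariant, so $b(E_{\oo_H}) = b(E_{\oo_{H_1}}) + b(E_{\oo_{H_2}})$. Combined with additivity of the ranks, this gives the displayed centralizer identity, and hence the lemma.

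The main obstacle is justifying this Springer-theoretic description of Waldspurger's map. It requires matching his symbol operations (joining symbols of types $B/C/D$, keeping special symbols special) with Lusztig's $j$-induction of special representations from the endoscopic Weyl group. If this matching is too delicate, my fallback is a direct computation with partitions. For a classical orbit with partition $\lambda$, the centralizer dimension is $\frac12\bigl(\sum_i (\lambda_i^*)^2 \mp \#\{j : \lambda_j \text{ odd}\}\bigr)$, with the sign depending on the type. I would apply this formula to Waldspurger's explicit partition recipe and verify the additivity case by case in types (B), (C), (D), tracking the parity corrections, which is where the real bookkeeping lies.
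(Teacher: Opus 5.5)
Your argument is correct. It differs from the paper mainly in that the paper gives no argument at all: it imports the formula from \cite[XI.16 Lemma]{W_pave}. You instead reduce the lemma to the identity $\dim Z_H(e)=\mathrm{rank}(H)+2\dim\mathcal{B}_e=\mathrm{rank}(H)+2b(E_e)$, taken in Lusztig's normalization (regular orbit $\leftrightarrow$ trivial representation), together with additivity of $b$ along Waldspurger's map.

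The step you call the main obstacle is already supplied in the paper. Lemma \ref{l:wal} (Waldspurger's XI.15) gives the Springer representation of $W(\lambda_1,\lambda_2)$ as $(\alpha+\alpha')\times(\beta+\beta')$. The Remark following it identifies this with $j$-induction, which preserves $b$ by construction.

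You can also check additivity by hand. In types $B/C$, $b(\alpha\times\beta)=2n(\alpha)+2n(\beta)+|\beta|$ with $n(\alpha)=\sum_i(i-1)\alpha_i$, and every term is additive under termwise sums. In type $D$ the last term is $\min(|\alpha|,|\beta|)$, so you must use the orientation $|\beta|\le|\alpha|$, i.e.\ the special-symbol convention $b_i\le a_i$ fixed in the paper. Otherwise, for instance, $(2)\times\emptyset$ and $\emptyset\times(2)$, both the trivial representation of $W(D_2)$, would sum to $(2)\times(2)$ with $b=2$ instead of $0$.

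Two small corrections. First, no sign twist is needed once you use the normalization with $b(E_e)=\dim\mathcal{B}_e$; this is also the normalization in which Lemma \ref{l:wal} holds, since regular and regular give regular. Second, the construction does not keep special symbols special: the paper notes that $W(\lambda_1,\lambda_2)$ need not be special. Your argument never uses that, so nothing breaks.

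What your route buys is a conceptual explanation: the lemma becomes the endoscopic analogue of the fact that Lusztig--Spaltenstein induction preserves centralizer dimensions. It also shows that Lemma \ref{dimW} is a formal consequence of Lemma \ref{l:wal}, which the paper needs anyway for Lemma \ref{Worder}. The paper's citation, in exchange, avoids all the normalization and orientation bookkeeping.
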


\subsection{Proof of Theorem \ref{main}}\label{sec.proof}
Now we can prove Theorem \ref{main}. 

\begin{proof}[Proof of Theorem \ref{main}]
Part (1) follows from Corollary \ref{cor1}. 
\par

We will prove Theorem \ref{main} (2) by induction on $m = \dim(\psi)$. 
If $\Pi_\psi = \{\pi\}$ is a singleton so that $S\Theta_\psi = \Theta_\pi$, 
then the assertion follows from Corollary \ref{cor1}.
\par

Now we assume that we have
\begin{itemize}
\item
$\oo_H^\st \in \NN(\h(\overline{F}))$ 
with $\dim(\oo_H^\st) \geq \dim(d_{H^\vee}(\Ad(H^\vee)N_\psi))$; 
\item
$\oo_H \in \NN(\h(F))$ with $\oo_H \subset \oo_H^\st$; 
\item
$\pi_0 \in \Pi_\psi$
\end{itemize}
such that $c_{\oo_H}(\pi_0) \not= 0$. 
The goal is to show that $\oo_H^\st = d_{H^\vee}(\Ad(H^\vee)N_\psi)$.
\par

By replacing $\oo_H^\st$ if necessary, 
we may assume that $\oo_H^\st$ is maximal in the set 
\[
\bigcup_{\pi \in \Pi_\psi} \bar\NN(\pi)
\]
with respect to the closure ordering. 
Then since $c_{\oo_H}(\pi)$ is non-negative for each $\pi \in \Pi_\psi$ by M{\oe}glin--Waldspurger \cite{MW}, 
we see that 
\[
c_{\oo_H}(\Pi_\psi) = \sum_{\pi \in \Pi_\psi} c_{\oo_H}(\pi) \not= 0.
\]
Consider 
\[
\Theta_{\Pi_\psi} = \sum_{\pi \in \Pi_\psi} \Theta_\pi  
= \sum_{\oo_H' \in \NN(\h(F))} c_{\oo_H'}(\Pi_\psi) \hat\mu_{\oo_H'}.
\]
Note that $\Theta_{\Pi_\psi} = S\Theta_{\psi, s_\psi}$.
\par

If $s_\psi = 1$, then $S\Theta_\psi = \Theta_{\Pi_\psi}$. 
In this case, Corollary \ref{cor1} implies that $\oo_H^\st = d_{H^\vee}(\Ad(H^\vee)N_\psi)$, 
as desired.
From now, suppose that $s_\psi \not=1$.
Using the eigenspace decomposition for $s_\psi$, 
one can obtain an endoscopic group $H_1 \times H_2$ of $H$
and an $A$-parameter $\psi_i$ for $H_i(F)$ for $i \in \{1,2\}$. 
Since $\det(\psi_1) = \det(\psi_2) = \1$, we see that $H_1 \times H_2$ is split over $F$. 
By \cite[Theorem 2.2.1]{Ar}, we know that $\Theta_{\Pi_\psi} = S\Theta_{\psi, s_\psi}$
is the transfer of $S\Theta_{\psi_1} \otimes S\Theta_{\psi_2}$, 
which is expanded as
\[
\sum_{(\oo_{H_1}^\st, \oo_{H_2}^\st) \in \NN(\h_1(\overline{F})) \times \NN(\h_2(\overline{F}))} 
\left( \sum_{\oo_{H_1} \subset \oo_{H_1}^\st} c_{\oo_{H_1}}(\psi_1) \hat\mu_{\oo_{H_1}}\right)
\otimes 
\left( \sum_{\oo_{H_2} \subset \oo_{H_2}^\st} c_{\oo_{H_2}}(\psi_1) \hat\mu_{\oo_{H_2}}\right).
\] 
Note that in the sum for $(\oo_{H_1}^\st, \oo_{H_2}^\st)$, 
only special nilpotent orbits can contribute 
(see \cite[IX.15 Th\'eor\`eme]{W_pave}). 
Setting $\gamma_{\oo'_{H}} = \gamma_{\oo'_H}(c_{\oo_{H_1}}(\psi_1), c_{\oo_{H_2}}(\psi_2))$, 
the local character expansion of $\Theta_{\Pi_\psi}$ is given by
\[
\Theta_{\Pi_\psi}
= 
\sum_{(\oo_{H_1}^\st, \oo_{H_2}^\st) \in \NN(\h_1(\overline{F})) \times \NN(\h_2(\overline{F}))} 
\sum_{\oo'_H \subset W(\oo_{H_1}^\st, \oo_{H_2}^\st)}
\gamma_{\oo'_H} \hat\mu_{\oo'_H},
\]
where in the last sum, $\oo_H'$ runs over rational orbits contained in $W(\oo_{H_1}^\st, \oo_{H_2}^\st)$.
In particular, $\gamma_{\oo_H} = c_{\oo_H}(\Pi_\psi) \not= 0$. 
Hence we can write $\oo_H^\st = W(\oo_{H_1}^\st, \oo_{H_2}^\st)$ 
for some special nilpotent orbits $\oo_{H_1}^\st$ and $\oo_{H_2}^\st$ 
such that $c_{\oo_{H_1}}(\psi_1) c_{\oo_{H_2}}(\psi_2) \not= 0$
for some $\oo_{H_i} \subset \oo_{H_i}^\st$. 
\par

By the induction hypothesis, we have 
$\dim(\oo_{H_i}^\st) \leq \dim (d_{H_i^\vee}(\Ad(H_i^\vee)N_{\psi_i}))$ for $i \in \{1,2\}$.
Hence by Proposition \ref{WS} and Lemma \ref{dimW}, 
we have 
\begin{align*}
\dim(\oo_H^\st) &= \dim\left(W(\oo_{H_1}^\st, \oo_{H_2}^\st)\right)
\\&\leq \dim\left(
W(d_{H_1^\vee}(\Ad(H_1^\vee)N_{\psi_1}), d_{H_2^\vee}(\Ad(H_2^\vee)N_{\psi_2}))
\right)
\\&\leq 
\dim\left(
d_{H^\vee}(\Ad(H^\vee)\xi(
d_{H_1}(d_{H_1^\vee}(\Ad(H_1^\vee)N_{\psi_1})), d_{H_2}(d_{H_2^\vee}(\Ad(H_2^\vee)N_{\psi_2}))
))
\right)
\\&\leq
\dim\left(
d_{H^\vee}(\Ad(H^\vee)\xi(\Ad(H_1^\vee)N_{\psi_1}, \Ad(H_2^\vee)N_{\psi_2}))
\right)
\\&= 
\dim\left(d_{H^\vee}(\Ad(H^\vee)N_{\psi})\right).
\end{align*}
Here, we used the facts 
that $d_{H_i}(d_{H_i^\vee}(\Ad(H_i^\vee)N_{\psi_i})) \geq \Ad(H_i^\vee)N_{\psi_i}$
and that $d_{H^\vee}$ is an order-reversing map.
Since we assume that $\dim(\oo_H^\st) \geq \dim\left(d_{H^\vee}(\Ad(H^\vee)N_{\psi})\right)$, 
this must be an equality.
Moreover, it together with Lemma \ref{dimW} implies that 
$\oo_{H_i}^\st = d_{H_i^\vee}(\Ad(H_i^\vee)N_{\psi_i})$ for $i \in \{1,2\}$. 
Then by Proposition \ref{WS}, we have
\begin{align*}
\oo_{H^\st} 
&\leq d_{H^\vee}(\Ad(H^\vee)\xi(d_{H_1}(\oo_{H_1}^\st), d_{H_2}(\oo_{H_2}^\st)))
\\&\leq d_{H^\vee}(\Ad(H^\vee)\xi(\Ad(H_1^\vee)N_{\psi_1}, \Ad(H_2^\vee)N_{\psi_2}))
\\&= d_{H^\vee}(\Ad(H^\vee)N_{\psi}).
\end{align*}
Since $\dim(\oo_H^\st) = \dim\left(d_{H^\vee}(\Ad(H^\vee)N_{\psi})\right)$, 
we conclude that $\oo_H^\st = d_{H^\vee}(\Ad(H^\vee)N_\psi)$, 
as desired.
\par

The proof of Theorem \ref{main} (3) is similar. 
Suppose that $\oo_H^\st \in \NN(\h(\overline{F}))$ contains a rational orbit $\oo_H$ 
such that $c_{\oo_H}(\pi_0) \not= 0$ for some $\pi_0 \in \Pi_\psi$. 
We may assume that $\oo_H^\st$ is maximal in $\cup_{\pi \in \Pi_\psi} \bar\NN(\pi)$
so that $c_{\oo_H}(\Pi_\psi) \not= 0$.
If $s_\psi = 1$, then Proposition \ref{refine} implies that $\oo_H^\st \leq d_{H^\vee}(\Ad(H^\vee)N_\psi)$. 
Otherwise, we can write $\oo_H^\st = W(\oo_{H_1}^\st, \oo_{H_2}^\st)$ as above
such that $\oo_{H_i}^\st \leq d_{H_i^\vee}(\Ad(H_i^\vee)N_{\psi_i})$ for $i \in \{1,2\}$ by the induction hypothesis.
By Proposition \ref{WS} and Lemma \ref{Worder} below, we have 
\begin{align*}
\oo_H^\st &= W(\oo_{H_1}^\st, \oo_{H_2}^\st)
\\&\leq W(d_{H_1^\vee}(\Ad(H_1^\vee)N_{\psi_1}), d_{H_2^\vee}(\Ad(H_2^\vee)N_{\psi_2}))
\\&\leq 
d_{H^\vee}(\Ad(H^\vee)\xi(
d_{H_1}(d_{H_1^\vee}(\Ad(H_1^\vee)N_{\psi_1})), d_{H_2}(d_{H_2^\vee}(\Ad(H_2^\vee)N_{\psi_2}))
))
\\&\leq
d_{H^\vee}(\Ad(H^\vee)\xi(\Ad(H_1^\vee)N_{\psi_1}, \Ad(H_2^\vee)N_{\psi_2}))
\\&= 
d_{H^\vee}(\Ad(H^\vee)N_{\psi}).
\end{align*}
This shows the inequality in Conjecture \ref{jiang}.
\par

On the other hand, 
we already know that $d_{H^\vee}(\Ad(H^\vee)N_{\psi}) \in \bar\NN(\pi)$ for some $\pi \in \Pi_\psi$. 
Since $\oo_H^\st \in \bar\NN(\pi)$ implies that 
$\oo_H^\st \leq d_{H^\vee}(\Ad(H^\vee)N_{\psi}) \in \bar\NN(\pi)$, 
if $\oo_H^\st \in \bar\NN(\pi)^{\max}$, then we must have 
$\oo_H^\st = d_{H^\vee}(\Ad(H^\vee)N_{\psi})$. 
This means that $\bar\NN(\pi)^{\max} = \{d_{H^\vee}(\Ad(H^\vee)N_{\psi})\}$.
This completes the proof of Theorem \ref{main} (3).
\end{proof}

\subsection{Preliminaries for proving Proposition \ref{WS}}\label{sec.partitions}
To show Proposition \ref{WS}, 
we recall some notions for partitions. 
\par

A \emph{partition} of a non-negative integer $d$ is 
a sequence $\lambda = (\lambda_1, \lambda_2, \ldots)$ such that 
$\lambda_i$ is a non-negative integers, $\lambda_1 \geq \lambda_2 \geq \cdots$ 
and $\lambda_1+\lambda_2+\cdots = d$.
We also write $\lambda = (\lambda_1, \dots, \lambda_l)$ if $\lambda_{j} = 0$ for $j > l$.
For an integer $k > 0$, we put $c_k(\lambda) = \#\{j \;|\; \lambda_j = k\}$.
Let $\PP(d)$ be the set of partitions of $d$. 
For $\lambda \in \PP(d)$, we denote by $\lambda^t$ the \emph{transpose} of $\lambda$. 
It is defined by 
\[
c_k(\lambda^t) = \lambda_k-\lambda_{k+1}
\]
for $k > 0$. 
\par

For $\lambda_i = (\lambda_{i,1}, \lambda_{i,2}, \ldots) \in \PP(d_i)$ for $i \in \{1,2\}$, 
we define partitions $\lambda_1+\lambda_2$ and $\lambda_1 \cup \lambda_2$ of $d_1+d_2$ by 
\begin{itemize}
\item
$\lambda_1+\lambda_2 = 
(\lambda_{1,1}+\lambda_{2,1}, \lambda_{1,2}+\lambda_{2,2}, \lambda_{1,3}+\lambda_{2,3}, \ldots)$;
\item
$c_k(\lambda_1 \cup \lambda_2) = c_k(\lambda_1)+c_k(\lambda_2)$ 
for each $k > 0$. 
\end{itemize}
For two partitions $\lambda = (\lambda_1, \lambda_2, \ldots)$ and $\mu = (\mu_1, \mu_2, \ldots)$ of $d$, 
we write $\lambda \geq \mu$ if 
\[
\sum_{j \leq j_0} \lambda_j \geq \sum_{j \leq j_0} \mu_j, 
\quad \forall j_0 \geq 1.
\]
By \cite[Lemma 4.3]{CK}, we have the following. 
\begin{enumerate}
\item
$\lambda \geq \mu$ if and only if $\lambda^t \leq \mu^t$.
\item
If $\lambda_i \geq \mu_i$ for $i \in \{1,2\}$, then $\lambda_1 \cup \lambda_2 \geq \mu_1 \cup \mu_2$.
\item
$(\lambda_1 \cup \lambda_2)^t = \lambda_1^t + \lambda_2^t$. 
\item
$(\lambda_1 \cup \lambda_2) + (\mu_1 \cup \mu_2) \geq (\lambda_1+\mu_1) \cup (\lambda_2+\mu_2)$.
\end{enumerate}
\par

We say that 
\begin{itemize}
\item
$\lambda$ is \emph{orthogonal} if $c_k(\lambda)$ is even 
for each even integer $k > 0$; 
\item
$\lambda$ is \emph{symplectic} if $c_k(\lambda)$ is even for each odd integer $k > 0$.
\end{itemize}
We denote by $\PP_\orth(d)$ (\resp $\PP_\symp(d)$) the set of orthogonal (\resp symplectic) partitions of $d$. 
If $d$ is even, we say that $\lambda\in \PP_\orth(d)$ is \emph{very even}, 
if all parts of $\lambda$ are even and each distinct part appears with even multiplicity. 
Decorate every even partition with the subscripts $I$ and $II$, e.g. $(2,2,4,4)_I$ and $(2,2,4,4)_{II}$. 
If $\lambda$ is not very even, a decorated partition is the same as the partition $\lambda$. 
For $d$ even, let $\PP_\orth(d)'$ denote the set of decorated partitions.
\par

A partition $\lambda \in \PP_\orth(2n) \cup \PP_\symp(2n)$ (\resp $\lambda \in \PP_\orth(2n+1)$) is called \emph{special} 
if $\lambda^t \in \PP_\symp(2n)$ (\resp $\lambda \in \PP_\orth(2n+1)$). 
\par

For a split classical group $H$, there is a canonical bijection 
\[
\NN(\h(\overline{F})) \longleftrightarrow 
\left\{
\begin{aligned}
&\PP_\orth(2n+1) \iif H = \SO_{2n+1}, \\
&\PP_\symp(2n) \iif H = \Sp_{2n}, \\
&\PP_\orth(2n)' \iif H = \SO_{2n}.
\end{aligned}
\right. 
\]
This bijection preserves the notions of special and order. Via this bijection, we identify $\NN(\h(\overline{F}))$ 
with the set of partitions satisfying the above conditions. 
\par

We remark that the very even decorated partitions $\lambda_I$ and $\lambda_{II}$ 
are not conjugate under $\SO_{2n}(\overline{F})$, but are conjugate by $\O_{2n}(\overline{F})$. 
This subtlety will not play a role in the combinatorial proofs below, and therefore we may ignore the decorations. 
See also Remark \ref{OvsSO}. 
\par

We write 
\[
d \colon \PP_\orth(2n+1) \leftrightarrows \PP_\symp(2n), 
\quad 
d \colon \PP_\orth(2n) \rightarrow \PP_\orth(2n)
\]
for the Spaltenstein dual maps. 
It is given by 
\[
d(\lambda) = \left\{\begin{aligned}
&((\lambda^t)^-)_C \iif \lambda \in \PP_\orth(2n+1), \\
&((\lambda^t)^+)_B \iif \lambda \in \PP_\symp(2n), \\
&(\lambda^t)_D \iif \lambda \in \PP_\orth(2n), \\
\end{aligned}
\right.
\]
where 
$(\lambda^t)^-$ (\resp $(\lambda^t)^+$) is given from $\lambda^t$ 
by replacing the smallest (\resp largest) positive component with itself minus $1$ (\resp plus $1$),  
and $\mu_X$ denotes the ``$X$-collapse'' of $\mu$ for $X \in \{B,C,D\}$.
Namely, $\mu_B$ (\resp $\mu_C$, $\mu_D$)
is unique largest partition in $\PP_{\orth}(2n+1)$ (\resp $\PP_{\symp}(2n)$, $\PP_\orth(2n)$) 
such that $\mu_X \leq \mu$. 
See \cite[Sections 4.1, 4.3]{CMBO1} for more details. 
\par

Now we recall the Waldspurger map (\cite[Sections XI.6, XI.7]{W_pave}).
Set $d = 2n+1$ if $H = \SO_{2n+1}$, and $d = 2n$ if $H = \Sp_{2n}$ or $H = \SO_{2n}$. 
Let $\lambda_i = (\lambda_{i,1}, \lambda_{i,2}, \ldots) \in \PP_*(d_i)$ be 
the special partition corresponding to a given special nilpotent orbit $\oo_{H_i}^\st \in \NN(\h_i(\overline{F}))$, 
where $* \in \{\orth, \symp\}$. 
Set $\epsilon_i = 1$ (\resp $\epsilon_i = 0$) if $* = \orth$ (\resp $* = \symp$).
\par

We define $J^+$ (\resp $J^-$) by the set of $j \geq 1$ such that 
\begin{itemize}
\item
$j \equiv d+1 \bmod 2$ (\resp $j \equiv d \bmod 2$); 
\item
$\lambda_{1,j} \equiv \epsilon_1 \bmod 2$ and $\lambda_{2,j} \equiv \epsilon_2 \bmod 2$;
\item
$j=1$ or $\lambda_{1,j-1}+\lambda_{2,j-1} > \lambda_{1,j} + \lambda_{2,j}$
(\resp $\lambda_{1,j}+\lambda_{2,j} > \lambda_{1,j+1}+\lambda_{2,j+1}$). 
\end{itemize}
Put $\xi = (\xi_1, \xi_2, \ldots)$ by
\[
\xi_j = \left\{
\begin{aligned}
&0 \iif j \not\in J^+ \cup J^-, \\
&1 \iif j \in J^+, \\
&{-1} \iif j \in J^-.
\end{aligned}
\right.
\]
Then $W(\oo_{H_1}^\st, \oo_{H_2}^\st) \in \NN(\h(\overline{F}))$ 
is defined by the orbit corresponding to the partition $\lambda = \lambda_1+\lambda_2+\xi$, 
i.e., if we write $\lambda = (\lambda_1, \lambda_2, \ldots)$, 
then $\lambda_j = \lambda_{1,j}+\lambda_{2,j}+\xi_j$ for $j \geq 1$. 
In this case, we write $\lambda = W(\lambda_1, \lambda_2)$. 

\subsection{Proof of Proposition \ref{WS}}\label{sec.proof_WS}
Now we show Proposition \ref{WS}. 
Let $\lambda_i \in \PP_*(d_i)$ be the special partition corresponding to $\oo_{H_i}^\st \in \NN(\h_i(\overline{F}))$.
Set $\lambda = W(\lambda_1,\lambda_2) \in \PP_*(d)$, which corresponds to $W(\oo_{H_1}^\st, \oo_{H_2}^\st)$.
Here, $d$ and $d_i$ are the sizes of $H$ and $H_i$, respectively, and $* \in \{\orth, \symp\}$.
Note that $d = d_1+d_2-1$ if $d$ is odd, whereas $d = d_1+d_2$ if $d$ is even.
We set $\epsilon_i = 1$ (\resp $\epsilon_i = 0$) if $\lambda_i \in \PP_\orth(d_i)$ (\resp $\lambda_i \in \PP_\symp(d_i)$).
\par

The orbit 
$d_{H^\vee}(\Ad(H^\vee)\xi(d_{H_1}(\oo_{H_1}^\st), d_{H_2}(\oo_{H_2}^\st)))$ 
in Proposition \ref{WS} corresponds to 
\[
d(d(\lambda_1) \cup d(\lambda_2)).
\]
Since $\lambda \leq d(d(\lambda))$ in general (see \cite[Corollary A3]{BV}), to prove Proposition \ref{WS}, 
it is enough to show that
\[
d(\lambda) \geq d(\lambda_1) \cup d(\lambda_2). 
\]
We will verify this inequality by induction on $d$. 
If $d = 0$ so that $d_1=d_2=0$, the claim is trivial.

\begin{lem}\label{good}
We may assume that 
all nonzero components of $\lambda_i^t$ 
are congruent to $d$ modulo $2$ for $i \in \{1,2\}$. 
\end{lem}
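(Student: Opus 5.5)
The plan is to strip off, two at a time, the columns of wrong parity from $\lambda_1$ and $\lambda_2$, and to argue that both sides of the target inequality $d(\lambda) \geq d(\lambda_1) \cup d(\lambda_2)$ change in a controlled way. The inequality for the smaller partitions will then come from the induction hypothesis on $d$.

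\textbf{Step 1 (wrong-parity columns come in pairs).} Suppose some nonzero component $c$ of $\lambda_1^t$ (say) satisfies $c \not\equiv d \bmod 2$. I first check that $c$ occurs in $\lambda_1^t$ with even multiplicity. This is the first use of specialness of $\lambda_1$: in the symplectic case $\lambda_1^t$ is symplectic, so its odd parts have even multiplicity. In the orthogonal cases, specialness is a condition of the same type on the parts of $\lambda_1^t$. The case distinction is on the parity of $d$ versus $d_i$, noting that $d_i \equiv d$ in types C/D, and that in type B one of $d_1,d_2$ is odd and the other even. So we may remove two equal columns of length $c$ from $\lambda_1$. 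That is, let $\lambda_1'$ be obtained by subtracting $2$ from $\lambda_{1,j}$ for $j \leq c$. Then $\lambda_1'$ is again special of the same type and of size $d_1 - 2c$.

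\textbf{Step 2 (behaviour of $W$ and of $d$).} Next I show that $W(\lambda_1',\lambda_2)$ is obtained from $W(\lambda_1,\lambda_2)$ by removing the same two columns. Subtracting $2$ from the first $c$ rows does not change the parities $\lambda_{1,j} \bmod 2$. It also does not change any strict inequality $\lambda_{1,j}+\lambda_{2,j} > \lambda_{1,j+1}+\lambda_{2,j+1}$, except possibly at $j = c$. So $J^\pm$ are unchanged except possibly at the boundary indices $j = c, c+1$, and the correction vector $\xi$ is unchanged away from there. The boundary analysis is where the parity hypothesis $c \not\equiv d$ enters: the index conditions $j \equiv d+1$ (for $J^+$) and $j \equiv d$ (for $J^-$) should rule out a change at $j = c$ or $c+1$. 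If some boundary case survives, the parity constraint $\lambda_{1,c} \equiv \epsilon_1$ together with the jump at $c$ should force $\xi_c,\xi_{c+1}$ to be the same for both partitions.

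On the dual side I then verify, using the explicit formulas $d(\lambda) = ((\lambda^t)^\mp)_{C/B}$ or $(\lambda^t)_D$, that
\[
d(\lambda) = d(\lambda') \cup (c,c), \qquad d(\lambda_1) = d(\lambda_1') \cup (c,c),
\]
where $\lambda' = W(\lambda_1',\lambda_2)$. The point is that a pair of equal rows of the wrong parity in the transpose is untouched both by the collapse and by the $\pm$ modification at the smallest or largest part. This is, again, a parity check.

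\textbf{Step 3 (induction).} The induction hypothesis applies to the data of size $d - 2c$ and gives $d(\lambda') \geq d(\lambda_1') \cup d(\lambda_2)$. Taking the union with $(c,c)$ on both sides and using property (2) of \cite[Lemma 4.3]{CK}, we get
\[
d(\lambda) \geq d(\lambda_1) \cup d(\lambda_2).
\]
Iterating removes all wrong-parity columns from $\lambda_1$ and $\lambda_2$, which proves the reduction.

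I expect the main obstacle to be the boundary bookkeeping at $j = c, c+1$ for $J^\pm$ in Step 2. This must be done separately in types B, C and D, because $\epsilon_1,\epsilon_2$ and the relation between $d$ and $d_1+d_2$ differ. I would first try the symplectic case and then adapt it to the other types.
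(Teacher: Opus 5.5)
Your plan is the paper's proof. Specialness gives an even multiplicity of the wrong-parity column length $c$ in $\lambda_1^t$. You pass to $\lambda_1'=\lambda_1-(2^c)$ and check $\xi'=\xi$, so that $\lambda^t=(\lambda')^t\cup(c,c)$. You then establish $d(\lambda)=d(\lambda')\cup(c,c)$ and $d(\lambda_1)=d(\lambda_1')\cup(c,c)$, and conclude with \cite[Lemma 4.3(2)]{CK}.

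Your boundary analysis for $\xi$ needs no hedging. The only comparison that changes is the one between rows $c$ and $c+1$. It enters $J^-$ only at $j=c$ and $J^+$ only at $j=c+1$, and both indices are excluded by $c\not\equiv d \bmod 2$.

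Two points in your write-up are wrong.

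\textbf{The type $B$ setup.} In type $B$ the pair is of type $(B,B)$: both $d_1,d_2$ are odd and $d=d_1+d_2-1$. So in fact $d_i\equiv d \bmod 2$ in every case. This is exactly what makes Step 1 work there: $B$-specialness means $\lambda_i^t$ is orthogonal, so its even parts, which are precisely the wrong-parity ones, occur with even multiplicity. In the configuration you describe, an even-orthogonal factor with $d$ odd would have symplectic $\lambda_i^t$. That says nothing about even parts, and Step 1 would not go through.

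\textbf{The justification of the dual identities.} You argue that $d(\lambda)=d(\lambda')\cup(c,c)$ holds because the pair $(c,c)$ is untouched by the $\pm$ step and by the collapse. That claim is false. Take $\lambda_1=(3,3,1)$ (special of type $B$), so $\lambda_1^t=(3,2,2)$, $c=2$ and $\lambda_1'=(1,1,1)$.
\begin{itemize}
\item The $-$ step lowers a copy of $c$, giving $(3,2,1)$.
\item The $C$-collapse then also moves the parts $3$ and $1$, giving $d(\lambda_1)=(2,2,2)$.
\item The identity still holds, since $d(\lambda_1')=(2)$, but only because the collapse repairs the damage.
\end{itemize}
The same happens in type $C$ whenever $c$ is the largest part of the transpose, e.g.\ $\lambda_1^t=(3,3,1,1)$. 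In type $D$ the collapse alone can move a copy of $c$: for $\lambda_1=(3,1,1,1)$, $c=1$, the collapse $(4,1,1)_D$ passes through $(3,2,1)$.

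The identities are true, but they need a genuine case check rather than the ``untouched'' heuristic. The paper likewise only asserts them ``by a case-by-case argument''. In particular you must handle the situation where $c$ is the smallest (type $B$) or largest (type $C$) part of the transpose, and show that the collapse compensates.
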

\begin{proof}
By symmetry, it is enough to consider the case where $i=1$. 
Suppose that there is an integer $k > 0$ with $k \not\equiv d \bmod 2$ such that 
$c_{k}(\lambda_1^t) = \lambda_{1,k}-\lambda_{1,k+1} > 0$. 
Since $\lambda_1$ is special, it is even. 
Set 
\[
\lambda_1' = (\lambda_{1,1}-2, \dots, \lambda_{1,k}-2, \lambda_{1,k+1}, \lambda_{1,k+2}, \ldots).
\]
We write $\lambda' = W(\lambda_1',\lambda_2) = \lambda_1'+\lambda_2+\xi'$. 
Then $\xi' = \xi$.
Since $\lambda^t = (\lambda')^t \cup (k,k)$, 
by a case-by-case argument, we see that $d(\lambda) = d(\lambda') \cup (k,k)$.
Similarly, we have $d(\lambda_1) = d(\lambda_1') \cup (k,k)$. 
Hence the inequality $d(\lambda') \geq d(\lambda_1') \cup d(\lambda_2)$
implies $d(\lambda) \geq d(\lambda_1) \cup d(\lambda_2)$.
\par

Therefore, we may replace $\lambda_1$ with $\lambda_1'$. 
Repeating this argument, we may assume that all positive components of $\lambda_1^t$ are equivalent to $d$ modulo $2$. 
\end{proof}

In the rest of this subsection, 
we assume that 
all nonzero components of $\lambda_i^t$ are congruent to $d$ modulo $2$ for $i \in \{1,2\}$. 
\par

Write $\lambda_i = (\lambda_{i,1}, \dots, \lambda_{i,k})$ such that $\lambda_{1,k} + \lambda_{2,k} > 0$. 
Define 
\[
l_i = \left\{
\begin{aligned}
&\max\{j \geq 1 \;|\; \lambda_{i,k-j+1} = \lambda_{i,k}\} \iif H = \SO_d, \\
&\max\{j \geq 1 \;|\; \lambda_{i,j} = \lambda_{i,1}\} \iif H = \Sp_d.
\end{aligned}
\right. 
\]
By Lemma \ref{good}, 
we see that $k \equiv d \bmod 2$, and that $l_i$ is even unless $l_i=k$.
We set $l = \min\{l_1,l_2\}$.
\par

First we consider the case where $l$ is odd. 
Then $H = \SO_d$ and $H_i = \SO_{d_i}$ are odd special orthogonal groups
with $d = d_1+d_2-1$, and $l_1=l_2=k$. 
In this case, we can write $\lambda_i = (\underbrace{a_i,\dots,a_i}_{l})$ 
with $a_i l = d_i$.
Hence 
\[
d(\lambda_i) = (\underbrace{l, \dots, l}_{a_i-1}, l-1)_C = (\underbrace{l, \dots, l}_{a_i-1}, l-1).
\]
On the other hand, since 
\[
\lambda = (\underbrace{a_1+a_2,\dots,a_1+a_2}_{l-1}, a_1+a_2-1), 
\]
we have
\[
d(\lambda) = (\underbrace{l, \dots, l}_{a_1+a_2-1}, l-2)_C = (\underbrace{l, \dots, l}_{a_1+a_2-2}, l-1,l-1)
\]
unless $l=1$ in which case 
$d(\lambda_i) = (\underbrace{1,\dots,1}_{d_i-1})$ and $d(\lambda) = (\underbrace{1,\dots,1}_{d-1})$.
Therefore, in any case, we have
\[
d(\lambda) = d(\lambda_1) \cup d(\lambda_2), 
\]
as desired. 
\par

In the rest, we assume that $l$ is even. 
Set
\[
a_i = \left\{
\begin{aligned}
&\lambda_{i,k} \iif H = \SO_d, \\
&\lambda_{i,1} \iif H = \Sp_d,
\end{aligned}
\right.
\quad
\lambda_i' = \left\{
\begin{aligned}
&(\lambda_{i,1}, \dots, \lambda_{i,k-l}) \iif H = \SO_d, \\
&(\lambda_{i,l+1}, \dots, \lambda_{i,k}) \iif H = \Sp_d.
\end{aligned}
\right.
\]
We write $\lambda' = W(\lambda_1',\lambda'_2) = \lambda_1'+\lambda'_2+\xi'$. 
We have the following.

\begin{itemize}
\item
If $H=\SO_d$, then $\xi_j' = \xi_j$ unless $j \in \{k-l+1, k\}$. 
Moreover, $\xi'_{k-l+1} = \xi'_k = 0$ and $\xi_{k-l+1} = -\xi_k = \delta$, 
where we set
\[
\delta = \left\{
\begin{aligned}
&1 \iif a_1 \equiv a_2 \equiv 1 \bmod 2, \\
&0 \other.
\end{aligned}
\right.
\]

\item
If $H = \Sp_d$, then $\xi_j' = \xi_{j+l}$ for any $j \geq 1$. 
Moreover, $\xi_{2} = \dots = \xi_{l-1} = 0$ and $\xi_1 = -\xi_l = \delta$, 
where we set
\[
\delta = \left\{
\begin{aligned}
&1 \iif a_1 \equiv \epsilon_1 \bmod 2,\; a_2 \equiv \epsilon_2 \bmod 2, \\
&0 \other.
\end{aligned}
\right.
\]
\end{itemize}
\par

Suppose that $\delta = 0$. 
Then $\lambda_i = \lambda'_i \cup (\underbrace{a_i, \dots, a_i}_l)$ 
and $\lambda = \lambda' \cup (\underbrace{a_1+a_2, \dots, a_1+a_2}_l)$.
By a case-by-case consideration, we see that 
\[
d(\lambda_i) = \left(d(\lambda_i')+(\underbrace{l,\dots,l}_{a_i}) \right)_{X_i}, 
\quad
d(\lambda) = \left(d(\lambda')+(\underbrace{l,\dots,l}_{a_1+a_2}) \right)_X
\]
with $X, X_i \in \{B,C,D\}$.
By \cite[Lemma 4.3(4)]{CK}, we have
\begin{align*}
&d(\lambda') \geq d(\lambda_1') \cup d(\lambda_2') 
\\&\implies 
d(\lambda') +(\underbrace{l,\dots,l}_{a_1+a_2}) 
\geq \left(d(\lambda_1') + (\underbrace{l,\dots,l}_{a_1})\right) \cup \left(d(\lambda_2') + (\underbrace{l,\dots,l}_{a_2})\right)
\\&\implies 
d(\lambda') +(\underbrace{l,\dots,l}_{a_1+a_2}) 
\geq d(\lambda_1) \cup d(\lambda_2) 
\\&\implies 
d(\lambda) = 
\left(d(\lambda') +(\underbrace{l,\dots,l}_{a_1+a_2})\right)_X
\geq d(\lambda_1) \cup d(\lambda_2).
\end{align*}
Here, we used the largest property of $X$-collapse.
\par

Suppose that $\delta = 1$. 
Then $\lambda = \lambda' \cup (a_1+a_2+1, \underbrace{a_1+a_2, \dots, a_1+a_2}_{l-2}, a_1+a_2-1)$ 
so that 
\[
\lambda^t = (\lambda')^t + (\underbrace{l,\dots,l}_{a_1+a_2-1}, l-1, 1).
\]
By a case-by-case consideration, we see that 
\[
d(\lambda) = \left(d(\lambda') + (\underbrace{l,\dots,l}_{a_1+a_2-1}, l-1, 1) \right)_X
\]
for some $X \in \{B,C,D\}$.
On the other hand, by the definition of $l$, 
we can find $i \in \{1,2\}$ such that 
$\lambda_{i,k-l} > a_i$ (\resp $a_i > \lambda_{i,l+1}$) if $H = \SO_d$ (\resp $H = \Sp_{d}$). 
Here, when $l = k$ and $H = \SO_d$ (\resp $H = \Sp_d$), 
we formally understand that $\lambda_{i,0} > a_i$ (\resp $a_i > \lambda_{i,k+1}$).
Since $a_i \equiv \epsilon_i \bmod 2$, we see that $\lambda_i = \lambda'_i \cup (\underbrace{a_i,\dots,a_i}_l)$
implies that 
\[
d(\lambda_i) = \left(d(\lambda'_i) + (\underbrace{l,\dots,l}_{a_i-1}, l-1, 1) \right)_{X_i}
\]
for some $X_i \in \{B,C,D\}$.
Then by \cite[Lemma 4.3(4)]{CK}, with $i' \in \{1,2\}$ such that $i' \not= i$, 
we have
\begin{align*}
&d(\lambda') \geq d(\lambda_1') \cup d(\lambda_2') 
\\&\implies 
d(\lambda') +(\underbrace{l,\dots,l}_{a_1+a_2-1},l-1,1) 
\geq \left(d(\lambda_i') + (\underbrace{l,\dots,l}_{a_i-1},l-1,1)\right) 
\cup \left(d(\lambda_{i'}') + (\underbrace{l,\dots,l}_{a_{i'}})\right)
\\&\implies 
d(\lambda') +(\underbrace{l,\dots,l}_{a_1+a_2-1},l-1,1) 
\geq d(\lambda_1) \cup d(\lambda_2) 
\\&\implies 
d(\lambda) = 
\left(d(\lambda') +(\underbrace{l,\dots,l}_{a_1+a_2-1},l-1,1)\right)_X
\geq d(\lambda_1) \cup d(\lambda_2).
\end{align*}
Here, we used the largest property of $X$-collapse.
\par

In both case, we obtained the implication 
\[
d(\lambda') \geq d(\lambda_1') \cup d(\lambda_2') 
\implies 
d(\lambda) \geq d(\lambda_1) \cup d(\lambda_2).
\]
By induction on $d$, we obtain the assertion of Proposition \ref{WS}. 

\subsection{More on Waldspurger's partitions} 
Retain the notation from the previous subsection. 
In particular $(\lambda_1,\lambda_2)$ is a pair of partitions of type $(B,B)$, $(C,D)$, or $(D,D)$, 
and $W(\lambda_1,\lambda_2)$ is the partition of type $B$, $C$, or $D$, respectively, as defined by Waldspurger. 
While $\lambda_1$ and $\lambda_2$ are special, $W(\lambda_1,\lambda_2)$ is not necessarily special. 
Therefore, it is instructive to understand the smallest special partition $\tl\lambda$ 
larger than $W(\lambda_1,\lambda_2)$. 
Notice that Proposition \ref{WS} is also equivalent to the statement 
\[
d(\tl\lambda)\geq d(\lambda_1)\cup d(\lambda_2).
\]
To describe $\tl\lambda$, 
we need to invoke the connection with Lusztig's symbols for Weyl group representations. 
For more details on the below combinatorial statements, 
see for example \cite[Chapter 13]{Car}. 
\par

If $W=W(B_n)=W(C_n)$ is the Weyl group of types $B_n$ and $C_n$, 
an irreducible $W$-representation is parametrized by a bipartition 
\[
\rho = \alpha \times \beta
= (a_0,a_1,\dots,a_k)\times (b_1,b_2,\dots,b_k),
\]
where $0 \leq a_0 \leq a_1 \leq \dots \leq a_k$ and $0 \leq b_1 \leq b_2 \leq \dots \leq b_k$
with $\sum a_i+\sum b_i=n$. 
The \emph{symbol} (or \emph{a-symbol}) of $\rho$ is
\[
A(\rho)=\left(\begin{matrix}a_0&&a_1+1& &\dots &&{a_k+k}\\&{b_1}&&b_2+1&&b_k+(k-1)\end{matrix}\right).
\]
Two symbols are regarded the same if one is obtained from the other by the transformation 
$\left(\begin{matrix}\underline a\\\underline b\end{matrix}\right)
\to \left(\begin{matrix}0&\underline a+1\\0&\underline b+1\end{matrix}\right)$. 
For example, 
$\left(\begin{matrix}0&&1&&2\\&2&&3\end{matrix}\right)
\equiv \left(\begin{matrix}0&&1&&2&&3\\&0&&3&&4\end{matrix}\right)$.
Two symbols $A(\rho)$ and $A(\rho')$ are in the same \emph{family} 
if one can be obtained from the other by permuting the entries so that each row is strictly increasing. 
In each family, there is a unique \emph{special} symbol, 
the one for which
\[
a_0 \leq b_1 \leq a_1+1 \leq b_2+1 \leq \dots \leq b_k+(k-1) \leq a_k+k.
\]
\par

If $W'=W(D_n)$ is the Weyl group of type $D_n$, 
the irreducible $W'$-representations are also parameterized by bipartitions of $n$, 
$\rho=\alpha\times \beta$, as for $W$, 
except $a_0=0$, and 
\begin{itemize}
\item 
if $\alpha=\beta$, 
then there are two different representations, 
labeled $(\alpha \times \alpha)_{I}$ and $(\alpha \times \alpha)_{II}$;
\item 
if $\alpha \neq \beta$, 
then $\alpha \times \beta = \beta \times \alpha$.
\end{itemize}
The difference comes from restricting a representation of $W$ to the index-two subgroup $W'$. 
The \emph{symbol} of type $D$ for $\rho$ is
\[
A^D(\rho)=\left(\begin{matrix}a_1&a_2+1 &\dots &a_k+(k-1)\\b_1&b_2+1&\dots&b_k+(k-1)\end{matrix}\right),
\]
with the caveat that if the two rows are identical then there are two decorated symbols $I,II$, 
and if the rows are distinct, flipping the rows gives the same symbol. 
Moreover, as in the case of types $B/C$, 
two symbols are regarded the same if one is obtained from the other by the transformation 
$\left(\begin{matrix}\underline a\\\underline b\end{matrix}\right)
\to \left(\begin{matrix}0&\underline a+1\\0&\underline b+1\end{matrix}\right)$.
Just as before, in each type $D$ family, 
there is a unique \emph{special} symbol; 
we choose the one for which
\[
b_1\leq a_1 \leq b_2+1 \leq \dots \leq b_k+(k-1) \leq a_k+(k-1).
\]
\par

Suppose $A(\rho)$ (or $A^D(\rho)$) is a special symbol. 
The corresponding special partition is obtained as follows:
\begin{description}
\item[Type $B$]
\[
\lambda(\rho)^B=\bigsqcup_{i\notin I^B}\{2a_{i-1}+1,2b_i-1\}\sqcup\bigsqcup_{i\in I^B}\{2a_{i-1},2b_i\}\sqcup\{2a_k+1\},
\]
where $I^B=\{1\le i\le k\mid b_i=a_{i-1}\}$;
\item[Type $C$]
\[
\lambda(\rho)^C=\{2a_0\}\sqcup\bigsqcup_{i\notin I^C} \{2a_i,2b_i\}\sqcup \bigsqcup_{i\in I^C}\{2a_i+1,2b_i-1\},
\]
where $I^C=\{1\le i\le k\mid b_i=a_i+1\}$;
\item[Type $D$]
\[
\lambda(\rho)^D=\bigsqcup_{i\notin I^D}\{2b_i+1,2a_i-1\}\sqcup\bigsqcup_{i\in I^D}\{2b_i,2a_i\},
\]
where $I^D=\{1\le i\le k\mid b_i=a_{i}\}$.
\end{description}
\par

The key fact for computing the special partition $\tl\lambda$ is given by Waldspurger \cite[XI.15 Lemme]{W_pave}.
\begin{lem}\label{l:wal}
Suppose the (special) Springer representations attached to $\lambda_1$ and $\lambda_2$ 
are $\alpha \times \beta$ and $\alpha' \times \beta'$, respectively. 
Then the Springer representation corresponding to $W(\lambda_1,\lambda_2)$ (and the trivial local system) 
is $(\alpha+\alpha') \times (\beta+\beta')$.
\end{lem}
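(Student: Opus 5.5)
The plan is to verify Lemma \ref{l:wal} combinatorially by passing to symbols: read off, from the special symbols of $\lambda_1$ and $\lambda_2$, the partitions $\lambda_1$, $\lambda_2$, form $\lambda_1+\lambda_2+\xi$, and then compare it with the partition obtained from the symbol of $(\alpha+\alpha')\times(\beta+\beta')$.

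\textbf{Step 1: reduce to sums of special representations.} Since $\lambda_1,\lambda_2$ are special, their Springer representations $\alpha\times\beta$ and $\alpha'\times\beta'$ have special symbols. Adding termwise (after padding both bipartitions to a common length $k$ by leading zeros, using the shift equivalence of symbols), the interlacing inequalities defining speciality are additive. So $(\alpha+\alpha')\times(\beta+\beta')$ again has a special symbol, with entries the sums of entries minus the shift.

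\textbf{Step 2: compare with $\lambda_1+\lambda_2$ blockwise.} The explicit formulas $\lambda(\rho)^X$ group the parts of the partition into pairs $\{2a_{i-1}+1,2b_i-1\}$ or $\{2a_{i-1},2b_i\}$ (type $B$), and analogously for types $C$ and $D$. After sorting, these pairs occupy consecutive positions $(2i-1,2i)$ in the partition, or $(2i,2i+1)$ depending on type, which is exactly the parity of the indices $J^\pm$. Summing the two partitions positionwise therefore sums the corresponding symbol entries $2(a_i+a_i')$ and $2(b_i+b_i')$, up to $\pm1$ corrections. The sum of the two corrections is $0$, $\pm2$, or mixed. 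Comparing with the membership of $i$ in $I^X$ for the sum symbol, I expect the discrepancy at position $j$ to be exactly $\xi_j\in\{0,\pm1\}$. The parity condition $\lambda_{1,j}\equiv\epsilon_1$, $\lambda_{2,j}\equiv\epsilon_2$ says both pairs are "odd type" (not in $I^X$). The strict-drop condition in the definition of $J^\pm$ then says the summed pair is no longer a coincidence $b_i=a_{i-1}$, so it should be read in the non-$I^X$ form, which shifts one part by $+1$ and the other by $-1$.

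\textbf{Step 3: handle the non-special output.} $W(\lambda_1,\lambda_2)$ need not be special, so the Springer representation attached to it with the trivial local system need not be the special one. I will compute its symbol directly from Shoji/Lusztig's algorithm for arbitrary partitions: list the parts, add $0,1,2,\dots$, and separate them into odd and even entries. I will check that this yields the possibly non-special symbol of $(\alpha+\alpha')\times(\beta+\beta')$ in its given arrangement rather than the special one in its family. This step is where the $\xi$-corrections must match exactly, and it is the main obstacle: the case analysis over the three types $B$, $C$, $D$ and over boundary indices ($j=1$, the last index, and the extra part $2a_k+1$ in type $B$) is delicate. I would organize it by induction on the number of parts, peeling off the top pair exactly as in the proof of Proposition \ref{WS}. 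Alternatively, one may simply cite \cite[XI.15 Lemme]{W_pave}, where this computation is carried out.
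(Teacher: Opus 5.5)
The paper gives no proof of its own here: the lemma is quoted from \cite[XI.15 Lemme]{W_pave}, so your closing fallback is exactly what the paper does. The combinatorial argument you put in front of it, however, is not correct as written.

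\textbf{Step 1 is false.} The type $B/C$ speciality condition $a_{i-1}\le b_i\le a_i+1$ has a constant offset. Adding two such conditions therefore only yields $b_i+b_i'\le a_i+a_i'+2$. In the $(C,D)$ case the type $C$ and type $D$ conditions do not even have the same shape.

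Here is a concrete counterexample. For $\SO_3\times\SO_3$ in $\SO_5$, take $\lambda_1=\lambda_2=(1,1,1)$. Each has Springer representation $\emptyset\times(1)$ and special symbol $\left(\begin{smallmatrix}0&&1\\&1&\end{smallmatrix}\right)$. The sum $\emptyset\times(2)$ has symbol $\left(\begin{smallmatrix}0&&1\\&2&\end{smallmatrix}\right)$, which is not special. Correspondingly, $W(\lambda_1,\lambda_2)=(2,2,1)$ (here $J^+=\emptyset$, $J^-=\{3\}$) is the non-special minimal orbit. This is consistent with the lemma, since $(2,2,1)$ has Springer representation $\emptyset\times(2)$.

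This is exactly the phenomenon the paper records right after the lemma through the sets $J(\tl\lambda)$. It is also inconsistent with your own Step 3: if the sum were always special, $W(\lambda_1,\lambda_2)$ would always be special.

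\textbf{Step 2 inherits the error.} It reads the partition off the sum symbol using the formulas $\lambda(\rho)^X$, which are only valid for special symbols. In the example, $\lambda(\emptyset\times(2))^B=(3,1,1)=\tl\lambda$. So the discrepancy with $\lambda_1+\lambda_2=(2,2,2)$ is $(1,-1,-1)$, not $\xi=(0,0,-1)$ as you predict.

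\textbf{The real content is in Step 3, which you only announce.} You need to run the general Springer algorithm (valid for arbitrary, non-special partitions) on $\lambda_1+\lambda_2+\xi$. You then have to show that it returns $(\alpha+\alpha')\times(\beta+\beta')$ in its own, possibly non-special, arrangement. In other words, the parity and strict-drop conditions defining $J^{\pm}$ must exactly compensate for the non-additivity of the partition-to-symbol map, in all three types and at the boundary indices.

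You only state this as an expectation (``I expect'', ``I will check''). As it stands, the proposal proves nothing beyond the citation. Either carry out that case analysis in full, or simply invoke \cite[XI.15 Lemme]{W_pave}, as the paper does.
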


The failure of $W(\lambda_1,\lambda_2)$ to be special comes from the fact that 
the symbol for $(\alpha+\alpha') \times (\beta+\beta')$ may not be special, 
even though  both symbols for $\alpha\times\beta$ and $\alpha'\times\beta'$ are special.
A case-by-case analysis immediately leads the following proposition.

\begin{prop}
The unique special Weyl group representation 
$\tl\rho = (c_0,c_1,\dots,c_k) \times (d_1,d_2,\dots,d_k)$ 
in the same family as the Springer representation $(\alpha+\alpha')\times(\beta+\beta')$ of $W(\lambda_1,\lambda_2)$ 
is given as follows:
\begin{description}
\item[Type $(B,B)$]
\[
(c_i,d_i) = \left\{
\begin{aligned}
&(a_i+a_i',b_i+b_i') \iif i\notin J(\tl\lambda), \\
&(a_i+a_i'+1,b_i+b_i'-1) \iif i\in J(\tl\lambda),
\end{aligned}
\right.
\]
where $J(\tl\lambda)=\{1\le i\le k\mid b_i=a_i+1\text{ and }b_i'=a_i'+1\}$;
\item[Type $(C,D)$]
\[
(c_i,d_i) = \left\{
\begin{aligned}
&(a_i+a_i',b_i+b_i') \iif i\notin J(\tl\lambda), \\
&(b_i+b_i',a_{i-1}+a'_{i-1}) \iif i\in J(\tl\lambda),
\end{aligned}
\right.
\]
where $J(\tl\lambda)=\{1\le i\le k\mid b_i=a_{i-1}\text{ and }b_i'=a_{i-1}'-1\}$;
\item[Type $(D,D)$]
\[
(c_i,d_i) = \left\{
\begin{aligned}
&(a_i+a_i',b_i+b_i') \iif i\notin J(\tl\lambda),\\
&(b_i+b_i'+1,a_{i-1}+a'_{i-1}-1) \iif i\in J(\tl\lambda),
\end{aligned}
\right.
\]
where $J(\tl\lambda)=\{2\le i\le k\mid b_i=a_{i-1}-1\text{ and }b_i'=a_{i-1}'-1\}$.
\end{description}
\end{prop}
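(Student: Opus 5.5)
Write $A=A(\alpha\times\beta)$ and $A'=A(\alpha'\times\beta')$ for the symbols of the two special Springer representations. I will work directly with symbols of the same length, padding with the shift $\left(\begin{smallmatrix}\underline a\\\underline b\end{smallmatrix}\right)\mapsto\left(\begin{smallmatrix}0&\underline a+1\\0&\underline b+1\end{smallmatrix}\right)$ so that $A$, $A'$ and the symbol of $(\alpha+\alpha')\times(\beta+\beta')$ all have the same $k$. With this normalization the entries add up to shifts: the entry of $A((\alpha+\alpha')\times(\beta+\beta'))$ in position $a_i+i$ is $(a_i+i)+(a'_i+i)-i$, and similarly in the bottom row. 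The family is determined by the multiset of entries of the symbol. The special member is obtained by sorting that multiset and distributing it alternately between the two rows so that the interlacing inequalities hold.

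The plan has three steps. First, I write the special interlacing chain for $A$ as $a_0\le b_1\le a_1+1\le b_2+1\le\cdots$ in type $B/C$, with the analogous chain in type $D$, and the same chain for $A'$. Adding the two chains term by term, after subtracting the index shifts, gives an almost-interlacing chain for the sum symbol. Each adjacent inequality of the sum chain holds automatically except where both summands are at the extreme allowed value, i.e. an equality that the shift converts into a violation by exactly one. Second, I identify these violation positions. In type $(B,B)$ an inequality $b_i+b'_i+i-1\le a_i+a'_i+i$ fails exactly when $b_i=a_i+1$ and $b'_i=a'_i+1$, so that the left side exceeds the right side by one; this is the set $J(\tl\lambda)$. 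In types $(C,D)$ and $(D,D)$ the relevant adjacent pair is (bottom entry $i$, top entry $i-1$), with a shifted equality. The $-1$ appearing in the conditions reflects the different index offsets in $A^D$ compared with $A$, and in type $(D,D)$ the condition $i\ge2$ comes from the absence of $a_0$. Third, at each violated position the sum symbol contains two adjacent entries in the wrong order. I swap them, i.e. exchange the corresponding top and bottom entries, which keeps the multiset and hence the family. Converting back from symbol entries to the bipartition entries $c_i,d_i$ by removing the shifts produces exactly the stated formulas: $(a_i+a'_i+1,\,b_i+b'_i-1)$ in type $B$, and $(b_i+b'_i,\,a_{i-1}+a'_{i-1})$ or $(b_i+b'_i+1,\,a_{i-1}+a'_{i-1}-1)$ in types $C$ and $D$.

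The step I expect to be the main obstacle is checking that these local swaps produce a genuinely special symbol. I need to show that two violations never occur at adjacent positions, so that no cascade of swaps arises, and that each swap preserves the neighbouring inequalities. For this I will use that in a special symbol the chain $x_1\le x_2\le\cdots$ cannot have two consecutive equalities of the extremal kind, because that would force a repeated entry in a single row, which is not allowed since rows are strictly increasing. Hence at most one of any two adjacent inequalities is tight in each summand, and after the swap the new entries fit between their unchanged neighbours. Uniqueness of the special member in the family then identifies the result as $\tl\rho$. The remaining work is bookkeeping of index offsets in each of the three cases, including the $I/II$ decoration in type $D$ when the rows coincide; there no violation occurs and the symbol is already special.
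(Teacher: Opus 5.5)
Your mechanism is sound, and it is a reasonable way to make precise what the paper only calls ``a case-by-case analysis''; the paper gives no further argument. In the sum symbol, the interlacing chain alternates between two kinds of links: those whose slack is the sum of the two summands' slacks, and those whose slack is that sum minus one. A violation, by exactly one, occurs precisely when both summands are tight at a link of the second kind, and transposing each violated pair gives the special representative. Two points of your write-up are not right as stated, however.

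First, in types $(C,D)$ and $(D,D)$ the violated pair is (top entry in position $i-1$, bottom entry in position $i$). So the swap changes $c_{i-1}$ and $d_i$, not $c_i$ and $d_i$. Your argument yields $c_{i-1}=b_i+b_i'$ (resp.\ $b_i+b_i'+1$) and $d_i=a_{i-1}+a'_{i-1}$ (resp.\ $a_{i-1}+a'_{i-1}-1$), with $c_i=a_i+a_i'$ unless $i+1\in J(\tl\lambda)$. The rule as printed, with $(c_i,d_i)$, is not literally correct. Take $\alpha\times\beta=\alpha'\times\beta'=(1,2)\times(0,0)$ in type $D_3$, which is special with $J(\tl\lambda)=\{2\}$. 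The sum has symbol $\left(\begin{smallmatrix}2&5\\0&1\end{smallmatrix}\right)$, whose special rearrangement $\left(\begin{smallmatrix}1&5\\0&2\end{smallmatrix}\right)$ is $(1,4)\times(0,1)$. The printed rule instead gives $(2,1)\times(0,1)$, which is not even a bipartition of $6$. So you should not claim to obtain ``exactly the stated formulas''; say that you prove the version with $(c_{i-1},d_i)$, which is evidently what is intended.

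Second, violations can occur at consecutive indices. In type $(B,B)$, $\alpha\times\beta=\alpha'\times\beta'=(0,0,0)\times(1,1)$ (the zero orbit of $\SO_5$) gives $J(\tl\lambda)=\{1,2\}$, and the chain $0,2,1,3,2$ of the sum becomes $0,1,2,2,3$. What is true is only that violated links are never adjacent in the chain. That follows automatically from the alternation above, not from your no-two-consecutive-equalities lemma. Consequently a neighbour of a swapped pair may itself move, and ``the new entries fit between their unchanged neighbours'' does not cover this case. The repair uses your lemma: each summand is tight at a violated link, hence strict at both adjacent links, so the sum has slack at least $2$ there. Slack $1$ suffices for a single swap, and slack $2$ is exactly what is needed when the pairs on both sides of such a link are transposed.
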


The special partition $\tl\lambda$ can then be obtained from $\tl\rho$ 
via the procedure described before Lemma \ref{l:wal}.
\par

\begin{rem}
The Springer representation corresponding to Waldspurger's orbit $W(\lambda_1,\lambda_2)$ 
is in fact the $j$-induced representation (truncated induction) from the Springer representations 
attached to $\lambda_1$ and $\lambda_2$. 
This can be seen at once from the explicit combinatorial description in \cite[Sections 4.5 (c), 5.3 (b), 6.3 (b)]{Lu}.
\end{rem}

Finally, we need a result on the relation between the closure order and the combinatorics of bipartitions. 
If $\alpha\times\beta=(a_0,a_1,\dots,a_k)\times (b_1,b_2,\dots,b_k)$ 
and $\gamma\times\delta=(p_0,p_1,\dots,p_k)\times (q_1,q_2,\dots,q_k)$ are two bipartitions of $n$, 
we write
$\alpha\times\beta\le \gamma\times\delta$
if
\begin{align*}
a_k+b_k+a_{k-1}+b_{k-1}+\dots+a_i+b_i&\le p_k+q_k+p_{k-1}+q_{k-1}+\dots+p_i+q_i,\text{ and}\\
a_k+b_k+a_{k-1}+b_{k-1}+\dots+a_i&\le p_k+q_k+p_{k-1}+q_{k-1}+\dots+p_i,
\end{align*}
for all $i$. 
\par

The following relation is proved in \cite[Propositions 2.1(3) and 2.12(3)]{AHS} for types $B$ and $C$, 
and the proof for type $D$ is the same.
\begin{prop}\label{p:achar}
Let $\lambda,\lambda' \in \PP_*(d)$ be given partitions with $* \in \{\orth, \symp\}$. 
Let $\alpha\times\beta$ and $\gamma\times\delta$ be the bipartitions corresponding to 
the Springer representations defined by $\lambda$ and $\lambda'$, respectively, and the trivial local systems. 
Then
\[
\lambda\le \lambda'\text{ if and only if }\alpha\times\beta\le \gamma\times\delta. 
\]
\end{prop}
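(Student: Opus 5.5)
The statement to prove is Proposition \ref{p:achar}: for $\lambda,\lambda'\in\PP_*(d)$, $\lambda\le\lambda'$ if and only if the corresponding Springer bipartitions satisfy $\alpha\times\beta\le\gamma\times\delta$. For types $B$ and $C$ this is \cite[Propositions 2.1(3) and 2.12(3)]{AHS}, so only type $D$ needs an argument. I would transport the \cite{AHS} proof. The plan is to express both orders through partial sums of a single sequence attached to $\lambda$ and compare the two sets of inequalities term by term.

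\textbf{Step 1: recover the Springer bipartition from $\lambda$.} For $\lambda\in\PP_\orth(2n)$, the type $D$ Springer correspondence with trivial local system goes as follows. Pad $\lambda$ to an even number $2k$ of parts and form the increasing sequence $\lambda_{(j)}+(j-1)$. Split its entries by parity into two halves; halving them gives the strictly increasing rows of the symbol $A^D(\rho)$ (the type $D$ analogue of the procedure recorded before Lemma \ref{l:wal}). Subtracting the staircase then yields $\alpha\times\beta$.

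\textbf{Step 2: rewrite the bipartition sums as partition sums.} I would show that the interleaved partial sums
\[
a_k+b_k+\dots+a_i+b_i, \qquad a_k+b_k+\dots+a_i,
\]
taken from the largest entries downward, equal
\[
\tfrac12\bigl(\lambda_1+\dots+\lambda_{j}\bigr)-\tfrac12\,(\text{an explicit correction } e_j(\lambda)),
\]
with $j=2(k-i+1)$ or $2(k-i)+1$ respectively. The correction $e_j(\lambda)$ depends only on $j$ and on parity data: the staircase contribution and the number of odd parts among the first $j$ parts of $\lambda$.

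\textbf{Step 3: compare corrections.} The key claim is that whenever the dominance inequalities for the first $j$ parts hold, the parity corrections behave monotonically. This is where orthogonality of $\lambda,\lambda'$ (even parts occur with even multiplicity) is used. With that in hand, the partial-sum inequalities for $\lambda\le\lambda'$ translate exactly into the defining inequalities for $\alpha\times\beta\le\gamma\times\delta$, and conversely. I expect this step to be the main obstacle. At positions $j$ where $\lambda_j$ and $\lambda_{j+1}$ straddle a parity change, the correction can jump by one. I would handle this by showing that at such indices the partial sum of an orthogonal partition has a forced parity, so the inequality after correcting differs from the raw one only by a rounding that is automatically satisfied. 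This is the same device used in \cite{AHS} for types $B$ and $C$.

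\textbf{Step 4: the remaining type $D$ issues.} Two features distinguish type $D$: the convention $a_0=0$, and the identification $\alpha\times\beta=\beta\times\alpha$. The symmetry is harmless because the order on bipartitions is checked with the representative fixed by the special-symbol normalization. Very even decorations can be ignored, since the two decorated orbits have the same closure relations with everything else (Remark \ref{OvsSO}). The order then reduces to the same inequalities as in types $B$ and $C$ with the shifted indexing, which completes the transport of the \cite{AHS} proof.
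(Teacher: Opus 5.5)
Your overall route is the paper's own. The paper cites \cite[Propositions 2.1(3) and 2.12(3)]{AHS} for types $B$ and $C$ and simply asserts that the type $D$ proof ``is the same'', and you propose to carry out that transport. The transport, however, goes wrong at the one point that is specific to type $D$, namely your Step 4. In type $D$ you have $\alpha\times\beta=\beta\times\alpha$, but the order $\le$ is not symmetric in the two rows, because the interleaved sums begin with $a_k$. So the proposition is only true for a particular ordered representative. The representative you propose, the one ``fixed by the special-symbol normalization'', does not exist for non-special $\lambda$. For instance, in $\SO_8$ the non-special partition $(3,2,2,1)$ has symbol rows $\{0,3,4\}$ and $\{0,1,2\}$, and neither ordering satisfies $b_1\le a_1\le b_2+1\le\cdots$. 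The choice genuinely matters. We have $(3,1^5)<(3,2,2,1)$, and the former gives $(a_1,a_2,a_3)=(1,1,2)$, $\beta=(0,0,0)$. If you order the latter as $(0,0,0)\times(0,2,2)$, then already the first inequality $a_3\le p_3$ reads $2\le 0$. The paper needs the proposition exactly for non-special partitions, since it is applied to $W(\lambda_1,\lambda_2)$ in Lemma \ref{Worder}, so this cannot be waved away.

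The repair is to let the parity split of your Step 1 fix the order: build $\alpha$ from the even entries of $\lambda_{(j)}+(j-1)$ and $\beta$ from the odd ones. This is also the only convention under which your Step 2 identity holds; with the rows swapped, $\lambda=(7,1)$ would have first interleaved sum $0$ rather than $\lceil 7/2\rceil=4$. With this convention, Step 2 becomes the clean statement $A_{2m}=\lfloor P_{2m}/2\rfloor$ and $A_{2m+1}=\lceil P_{2m+1}/2\rceil$. Here $A_j$ denotes the interleaved partial sums and $P_j=\lambda_1+\cdots+\lambda_j$.

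Your Step 3 then misplaces the role of orthogonality. The implication $\lambda\le\lambda'\Rightarrow\alpha\times\beta\le\gamma\times\delta$ is just monotonicity of rounding and needs no parity argument. The real difficulty is the converse, where rounding could hide a violation; it is not that the corrected inequality is ``automatically satisfied''. The argument you need runs as follows.
\begin{itemize}
\item Take the first $j$ with $P_j(\lambda)>P_j(\lambda')$. Rounding can hide this only if $P_j(\lambda)=P_j(\lambda')+1\equiv j+1 \pmod 2$.
\item That parity means an odd number of $\lambda_1,\dots,\lambda_j$ are even. Orthogonality of $\lambda$ then forces $\lambda_{j+1}=\lambda_j$ to be even.
\item Since $\lambda'_{j+1}\le\lambda'_j<\lambda_j$, you get $P_{j+1}(\lambda)\ge P_{j+1}(\lambda')+2$.
\end{itemize}
A difference of $2$ cannot be hidden by rounding, so the rounded inequality at index $j+1$ fails.
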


We can now prove the compatibility of Waldspurger's construction with the closure order. 
\begin{lem}\label{Worder}
Let $\lambda_i,\lambda_i' \in \PP_*(d_i)$ be special partitions
for $i \in \{1,2\}$ and $* \in \{\orth, \symp\}$. 
If $\lambda_1 \leq \lambda_1'$ and $\lambda_2 \leq \lambda_2'$, 
then 
\[
W(\lambda_1, \lambda_2) \leq W(\lambda_1', \lambda_2'). 
\]
\end{lem}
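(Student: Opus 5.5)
The plan is to translate the statement into the language of bipartitions, where both the Waldspurger map and the closure order have simple descriptions. By Lemma \ref{l:wal}, if $\alpha_i\times\beta_i$ and $\alpha_i'\times\beta_i'$ denote the (special) Springer bipartitions attached to $\lambda_i$ and $\lambda_i'$, then the Springer bipartitions of $W(\lambda_1,\lambda_2)$ and $W(\lambda_1',\lambda_2')$, both with trivial local system, are $(\alpha_1+\alpha_2)\times(\beta_1+\beta_2)$ and $(\alpha_1'+\alpha_2')\times(\beta_1'+\beta_2')$. Here the sum of two bipartitions is taken entrywise, after padding with zeros so that both have the same length $k$. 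Proposition \ref{p:achar} applies to arbitrary (not necessarily special) partitions of the relevant type, since only trivial local systems are involved. It therefore suffices to prove two things: first, $\lambda_i\le\lambda_i'$ gives $\alpha_i\times\beta_i\le\alpha_i'\times\beta_i'$; second, the order $\le$ on bipartitions is compatible with entrywise addition.

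The first point is exactly Proposition \ref{p:achar} applied in $\PP_*(d_i)$. For the second point, the defining inequalities of $\alpha\times\beta\le\gamma\times\delta$ are a family of linear inequalities in the entries, namely the tail sums
\[
\sum_{j\ge i}(a_j+b_j)\quad\text{and}\quad\sum_{j\ge i}(a_j+b_j)-b_i .
\]
Each of these tail sums is additive under entrywise addition of bipartitions with a common indexing. Hence adding the inequalities for $i=1$ and $i=2$ gives the inequality for the sums. Applying Proposition \ref{p:achar} once more, now in $\PP_*(d)$ for the partitions $W(\lambda_1,\lambda_2)$ and $W(\lambda_1',\lambda_2')$, yields $W(\lambda_1,\lambda_2)\le W(\lambda_1',\lambda_2')$.

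The main obstacle is the bookkeeping needed to make the indexing consistent. Bipartitions are written increasingly and padded on the left, with $a_0$ present in types $B$ and $C$ but absent in type $D$. Adding $\alpha+\alpha'$ as Waldspurger does in Lemma \ref{l:wal} requires aligning the largest entries, that is, padding with zeros at the small end, and the comparison in Proposition \ref{p:achar} must be stated with the same alignment. I would handle this by fixing one large common $k$ for all four bipartitions, padding with leading zeros. The tail sums $\sum_{j\ge i}$ do not change under such padding, so the inequalities are unaffected, and addition of the padded sequences is the addition used in Lemma \ref{l:wal}.

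One further check is needed for type $(C,D)$, where a type $C$ bipartition is added to a type $D$ bipartition with $a_0'=0$; the $D$ one should be regarded as a $B/C$-style bipartition with $a_0'=0$ before adding. Likewise, in type $D$ the unordered convention $\alpha\times\beta=\beta\times\alpha$ must be handled by choosing the ordering that Lemma \ref{l:wal} implicitly uses. I would verify these conventions against the explicit formula for $W$ in Section \ref{sec.w_map}. Decorations for very even partitions are ignored, as in Remark \ref{OvsSO}.
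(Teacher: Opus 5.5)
Your proposal is correct and follows the paper's proof: you pass to the bipartitions $(\alpha+\alpha')\times(\beta+\beta')$ via Lemma \ref{l:wal}, use that the tail-sum inequalities defining the bipartition order are additive under entrywise sums (the paper just calls this step ``immediate''), and apply Proposition \ref{p:achar} in both directions. Your bookkeeping on zero-padding and on the type $(C,D)$ and type $D$ ordering conventions fills in details the paper leaves implicit, but it does not change the argument.
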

\begin{proof}
Suppose the Springer representations attached to $\lambda_1$ and $\lambda_2$ 
are $\alpha\times\beta$ and $\alpha'\times \beta'$, respectively. 
By Lemma \ref{l:wal}, 
the Springer representation corresponding to $W(\lambda_1,\lambda_2)$  is $(\alpha+\alpha')\times(\beta+\beta')$.
Similarly, if the Springer representations attached to $\lambda_1'$ and $\lambda_2'$ 
are $\gamma\times\delta$ and $\gamma'\times \delta'$, respectively, 
then the Springer representation corresponding to $W(\lambda_1',\lambda_2')$ 
is $(\gamma+\gamma')\times(\delta+\delta')$.
\par

It is immediate that if $\alpha\times\beta\le \gamma\times\delta$ and $\alpha'\times\beta'\le \gamma'\times\delta'$, 
then also $(\alpha+\alpha')\times(\beta+\beta')\le (\gamma+\gamma')\times(\delta+\delta')$. 
The claim now follows from Proposition \ref{p:achar}.
\end{proof}

\subsection*{Acknowledgments}
We are deeply grateful to Jean-Loup Waldspurger for sharing his expertise on the endoscopic transfer.
The first author was partially supported by JSPS KAKENHI Grant Number 23K12946.



\begin{thebibliography}{30}
\bibitem{AHS}
{P. N. Achar, A. Henderson and E. Sommers},
{\em Pieces of nilpotent cones for classical groups}. 
\emph{Represent. Theory} {\bf15} (2011), 584--616.

\bibitem{ABV} 
{J. Adams, D. Barbasch and D. A. Vogan,~Jr.}, 
{\em The Langlands classification and irreducible characters for real reductive groups}. 
\emph{Progress in Mathematics,} {\bf104}. Birkh\"auser Boston, Inc., Boston, MA, 1992. xii+318 pp.

\bibitem{Ar}
{J. Arthur}, 
{\em The endoscopic classification of representations. Orthogonal and symplectic groups}. 
\emph{American Mathematical Society Colloquium Publications,} {\bf61}. 
American Mathematical Society, Providence, RI, 2013. xviii+590 pp.

\bibitem{AGIKMS}
{H. Atobe, W. T. Gan, A. Ichino, T. Kaletha, A. M\'inguez and S. W. Shin},
{\em Local Intertwining Relations and Co-tempered $A$-packets of Classical Groups}.
Preprint 2025, arXiv:2410.13504v2.

\bibitem{BV}
{D. Barbasch and D. A. Vogan,~Jr.}, 
{\em Unipotent representations of complex semisimple groups}. 
\emph{Ann. of Math. (2)} {\bf121} (1985), no.~1, 41--110.

\bibitem{Car}
{R. W. Carter}, 
{\em Finite groups of Lie type. Conjugacy classes and complex characters}. 
Reprint of the 1985 original. 
\emph{Wiley Classics Library}. 
A Wiley-Interscience Publication. John Wiley \& Sons, Ltd., Chichester, 1993. xii+544 pp.

\bibitem{CK}
{D. Ciubotaru and J.-L. Kim}, 
{\em The wavefront set: bounds for the Langlands parameter}. 
\emph{Math. Ann.} {\bf393} (2025), no.~2, 1827--1861.

\bibitem{CMBO2}
{D. Ciubotaru, L. Mason-Brown and E. Okada}, 
{\em Wavefront sets of unipotent representations of reductive $p$-adic groups II}. 
\emph{J. Reine Angew. Math.} {\bf823} (2025), 191--253.

\bibitem{CMBO1}
{D. Ciubotaru, L. Mason-Brown and E. Okada}, 
{\em Wavefront sets of unipotent representations of reductive $p$-adic groups I}. 
Preprint 2001, arXiv:2112.14354v5, 
to appear in \emph{Amer. Jour. Math.}

\bibitem{C}
{L. Clozel}, 
{\em Characters of nonconnected, reductive $p$-adic groups}.
\emph{Canad. J. Math.} {\bf39} (1987), no.~1, 149--167.

\bibitem{GLLS}
{F. Gao, B. Liu, C.-H. Lo and F. Shahidi}, 
{\em Covering Barbasch-Vogan duality and wavefront sets of genuine representations}.
Preprint 2025, arXiv:2511.14750v2.

\bibitem{HC}
{Harish-Chandra}, 
{\em Admissible invariant distributions on reductive p-adic groups}. 
With a preface and notes by Stephen DeBacker and Paul J. Sally, Jr. 
\emph{University Lecture Series,} {\bf16}. 
\emph{American Mathematical Society, Providence, RI,} 1999. xiv+97 pp.

\bibitem{HLLS}
{A. Hazeltine, B. Liu, C.-H. Lo and F. Shahidi},
{\em On the upper bound of wavefront sets of representations of $p$-adic groups}.
Preprint 2024, arXiv:2403.11976v2.

\bibitem{HII}
{K. Hiraga, A. Ichino and T. Ikeda}, 
{\em Formal degrees and adjoint $\gamma$-factors.} 
\emph{J. Amer. Math. Soc.} {\bf21} (2008), no.~1, 28--304. 

\bibitem{J}
{D. Jiang}, 
{\em Automorphic integral transforms for classical groups I: Endoscopy correspondences}. 
Automorphic forms and related geometry: assessing the legacy of I. I. Piatetski-Shapiro, 179--242,
\emph{Contemp. Math.,} {\bf614}, 
\emph{Amer. Math. Soc., Providence, RI,} 2014.

\bibitem{K}
{T. Konno}, 
{\em Twisted endoscopy and the generic packet conjecture.} 
\emph{Israel J. Math.} {\bf129} (2002), 253--289.

\bibitem{LS}
{B. Liu and F. Shahidi}, 
{\em The Jiang conjecture on the wavefront sets of local Arthur packets}.
Preprint 2025, arXiv:2503.05343v1.

\bibitem{Lu}
G. Lusztig, 
{\em Unipotent classes and special Weyl group representations}, \emph{J. Algebra} {\bf 321} (2009), 3418--3449.

\bibitem{MW}
{C. M{\oe}glin and J.-L. Waldspurger}, 
{\em Mod\`eles de Whittaker d\'eg\'en\'er\'es pour des groupes $p$-adiques}. 
\emph{Math. Z.} {\bf196} (1987), no.~3, 427--452.

\bibitem{Sp}
{N. Spaltenstein}, 
{\em Classes unipotentes et sous-groupes de Borel}. 
\emph{Lecture Notes in Mathematics,} {\bf946}. 
\emph{Springer-Verlag, Berlin-New York,} 1982. ix+259 pp.

\bibitem{V}
{S. Varma}, 
{\em On descent and the generic packet conjecture.} 
\emph{Forum Math.} {\bf29} (2017), no.~1, 111--155. 

\bibitem{W_pave}
{J.-L. Waldspurger}, 
{\em Int\'egrales orbitales nilpotentes et endoscopie pour les groupes classiques non ramifi\'es}. 
\emph{Ast\'erisque} No.~{\bf269} (2001), vi+449 pp.

\end{thebibliography}
\end{document}